\documentclass{article}
\usepackage{amsfonts}
\usepackage{amsmath}
\usepackage{amssymb}
\usepackage{mathtools}
\usepackage{fullpage}
\usepackage{amsthm}
\usepackage{thmtools}
\usepackage{thm-restate}
\usepackage{graphicx}
\usepackage{relsize}
\usepackage{todonotes}
\usepackage{hyperref}
\usepackage{tikz-cd}
\usepackage{tikz}
\usepackage[capitalise]{cleveref}
\usepackage{placeins}
\usepackage{geometry}

\newtheorem{theorem}{Theorem}[section]
\newtheorem{proposition}[theorem]{Proposition}
\newtheorem{lemma}[theorem]{Lemma} 
\newtheorem{corollary}[theorem]{Corollary}

\theoremstyle{definition}

\newtheorem{definition}[theorem]{Definition}
\newtheorem{notation}[theorem]{Definition}


\newenvironment{customthm}[1]
{\innercustomthm}
{\endinnercustomthm}

\newcommand{\N}{{\mathbb N}}
\newcommand{\R}{{\mathbb R}}
\renewcommand{\S}{{\mathbb S}}
\newcommand{\Q}{{\mathbb Q}}
\newcommand{\Z}{{\mathbb Z}}
\newcommand{\CZ}{{\mathcal Z}}
\newcommand{\CC}{{\mathcal C}}

\newcommand{\Lk}{\operatorname{Lk}}

\newcommand{\Vertices}{\operatorname{Vertices}}

\title{Constructing groups of type $FP_2$ over fields but not over the integers}

\author{Robert Kropholler}

\begin{document}
\maketitle

\begin{abstract}
	We construct examples of groups that are $FP_2(\Q)$ and $FP_2(\Z/p\Z)$ for all primes $p$ but not of type $FP_2(\Z)$. 
\end{abstract}

\section{Introduction}

We begin with a definition:
\begin{definition}
	A group $G$ is of {\em type $FP_n(R)$} if there is an exact sequence: 
	$$P_n\to\dots\to P_2\to P_1\to P_0\to R\to 0$$
	of projective $R G$-modules such that $P_i$ is finitely generated and $R$ is the trivial $R G$-module. 
\end{definition}

Using the chain complex of the universal cover of a presentation 2-complex we see that finitely presented groups are of type $FP_2(\Z)$. 
Moreover, if a group is of type $FP_2(\Z)$, then it is of type $FP_2(R)$ for any ring $R$. 

In \cite{BeBr}, the first examples of groups that are of type $FP_2(\Z)$ but not finitely presented are given. 
More recently, there have been many new constructions of groups of type $FP_2$ with interesting properties, see \cite{BeBr, Lea, Lea2, KLS, Kropholler, LBrown}.
In particular, there are various constructions of uncountable families of groups of type $FP_2(\Z)$ \cite{Lea, Kropholler, LBrown}. 

It is also possible to use the examples of \cite{BeBr, Lea} to give examples of groups that are of type $FP_2(R)$ but not $FP_2(\Z)$ for certain rings $R$. 
The construction of \cite{BeBr}, takes in a connected flag complex $L$ and constructs a group $BB_L$ that is of type $FP_2(R)$ if and only if $H_1(L; R) = 0$. 
Since these flag complexes are finite, it follows that if $H_1(L; \Z/p\Z) = 0$ for all primes $p$, then $H_1(L; \Z) = 0$. 
Thus, if $BB_L$ is $FP_2(\Z/p\Z)$ for all primes $p$, then it is type $FP_2(\Z)$. 
Similar results can be obtained for the groups constructed in \cite{Lea}. 

In this paper we build on the work of \cite{Lea}. Leary built uncountably many groups of type $FP_2$ by taking branched covers of a cube complex $X$. 
Leary's construction takes as input a flag complex $L$ and a set $S\subset \Z$. It outputs a cube complex $X_L^{(S)}$ with a height function $f^{(S)}$. These have the property that if a vertex has height in $S$, then the ascending and descending links at $v$ are $L$. If the height of a vertex is not in $S$, then the ascending and descending links are $\tilde{L}$, the universal cover of $L$. 

We build on this construction by varying the covers that can be taken at each height. Our construction is the following:
\begin{restatable}{construction}{constrmain}\label{constrmain}
	Let $L$ be a flag complex. Let $\sigma\colon \Z\to \CC$ be a function, where $\CC$ is the collection of normal covers of $L$. 
	Then there is a cube complex $X_L^\sigma$ and a height function $f_\sigma$ such that if $f_{\sigma}(v) = n$, then the ascending and descending links of $v$ are exactly $\sigma(n)$. 
\end{restatable}

This cube complex arises as a branched cover and there is a group of deck tranformations $G_L(\sigma)$. Thus we can use the cube complex $X_L^{\sigma}$ to investigate the finiteness properties of $G_L(\sigma)$. We obtain the following theorem: 

\begin{customthm}{\cref{thm:typefpn}}
	Let $\sigma, L, \CC$ be as above. Suppose that $\pi_1(L)/\pi_1(\sigma(n))$ is of type $FP_k(R)$ for all $n$.  Then $G_L(\sigma)$ is type $FP_k(R)$ if and only if $\tilde{H}_i(\sigma(n); R)$ vanishes for all but finitely pairs $(i, n)$ with $n\in \Z$ and $i < k$. 
	
	Similarly, suppose $\pi_1(L)/\pi_1(\sigma(n))$ is of type $FP(R)$ for all $n$. Then $G_L(\sigma)$ is type $FP(R)$ if and only if $\tilde{H}_i(\sigma(n), R)$ vanishes for all but finitely pairs $(i, n)$ with $n\in \Z$ and $i\in\N$.
\end{customthm}

We can use this to construct new examples of groups of type $FP_2(R)$ over various rings $R$. Here we detail two such constructions. 

As pointed out previously, if $G$ is of type $FP_2(\Z)$, then $G$ is of type $FP_2(R)$ for all rings $R$. 
One may hope that there is a collection of rings $\mathcal{R}$ such that if $G$ is of type $FP_2(R)$ for all $R\in \mathcal{R}$, then $G$ is of type $FP_2(\Z)$. 
One possible candidate is the collection of fields. 
We show that this is not the case, proving the following.

\begin{theorem}
	There are groups that are of type $FP_2(F)$ for all fields $F$,  but not of type $FP_2(\Z)$. 
\end{theorem}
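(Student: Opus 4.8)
The plan is to apply \cref{thm:typefpn} with $k=2$ and produce a single finite flag complex $L$ together with a function $\sigma$ whose covers are acyclic in low degrees over every field but not over $\Z$. Every $\sigma(n)$ will be connected, so $\tilde H_0(\sigma(n);R)=0$ for all $R$ and the relevant condition concerns only $H_1$; the criterion then reads that $G_L(\sigma)$ is of type $FP_2(R)$ if and only if $H_1(\sigma(n);R)=0$ for all but finitely many $n$. I would first reduce the quantifier ``for all fields $F$'' to the prime fields: if $\mathrm{char}\,F=0$ then $H_1(\sigma(n);F)=H_1(\sigma(n);\Q)\otimes_\Q F$, and if $\mathrm{char}\,F=p$ then $H_1(\sigma(n);F)=H_1(\sigma(n);\mathbb{F}_p)\otimes_{\mathbb{F}_p}F$, so vanishing over $F$ is equivalent to vanishing over $\Q$ or over $\mathbb{F}_p$. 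Via the universal coefficient theorem and connectivity, these reduce to group-theoretic statements about $H_1(\sigma(n);\Z)=\pi_1(\sigma(n))^{ab}$: I want each such abelianization to be finite (so that all rational homology dies), to be $p$-torsion for only finitely many $n$ for each fixed prime $p$, yet to be nonzero for infinitely many $n$. The cleanest way to achieve this is to make $H_1(\sigma(n);\Z)$ nonzero of order a power of the $n$-th prime $p_n$, so that its reduction mod $\ell$ is nonzero precisely when $\ell=p_n$.

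To realise this I would take $\Gamma=SL_3(\Z)$, let $L$ be a finite flag complex with $\pi_1(L)\cong\Gamma$ (obtained from a finite presentation of $SL_3(\Z)$ by simplicial approximation followed by barycentric subdivision, which renders the complex flag), and let $\sigma(n)$ for $n\ge 1$ be the regular cover corresponding to the principal congruence subgroup $\Gamma(p_n)\trianglelefteq SL_3(\Z)$, where $p_n$ is the $n$-th prime at least $5$; for $n\le 0$ I set $\sigma(n)=\tilde L$. The deck group of $\sigma(n)$ is $SL_3(\Z/p_n\Z)$ for $n\ge 1$ and $SL_3(\Z)$ for $n\le 0$, both of type $FP_\infty$ (a finite group, respectively an arithmetic group of type $FP_\infty$), so the hypothesis of \cref{thm:typefpn} that $\pi_1(L)/\pi_1(\sigma(n))$ be $FP_2(R)$ holds for every $R$. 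The decisive input is the computation of Lee--Szczarba: for $p\ge 5$ one has $\Gamma(p)^{ab}\cong\mathfrak{sl}_3(\mathbb{F}_p)\cong(\Z/p\Z)^{8}$, whence $H_1(\sigma(n);\Z)\cong(\Z/p_n\Z)^{8}$ for $n\ge 1$.

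With this choice the verification is immediate. Since finite groups have vanishing rational homology and $H_1(\tilde L;\Q)=0$, we get $H_1(\sigma(n);\Q)=0$ for every $n$, so $G_L(\sigma)$ is $FP_2(\Q)$ and hence $FP_2(F)$ for all $F$ of characteristic $0$; for a fixed prime $\ell$ the group $H_1(\sigma(n);\mathbb{F}_\ell)\cong(\Z/p_n\Z)^{8}\otimes_\Z\mathbb{F}_\ell$ is nonzero only when $p_n=\ell$, i.e.\ for at most one $n$, so $G_L(\sigma)$ is $FP_2(\mathbb{F}_\ell)$ and thus $FP_2(F)$ for all $F$ of characteristic $\ell$; but $H_1(\sigma(n);\Z)\cong(\Z/p_n\Z)^{8}\neq 0$ for all $n\ge 1$, so the pairs $(1,n)$ furnish infinitely many nonvanishing reduced homology groups and \cref{thm:typefpn} shows that $G_L(\sigma)$ is not $FP_2(\Z)$. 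The main obstacle is not the Morse-theoretic bookkeeping, which is fully packaged into \cref{thm:typefpn}, but rather exhibiting one finitely presented group whose relevant normal subgroups have finite abelianization with prime torsion tending to infinity: the required rational acyclicity pushes us into higher-rank lattices with property (FAb), and the precise torsion calculation $\Gamma(p)^{ab}\cong\mathfrak{sl}_3(\mathbb{F}_p)$ is the nontrivial arithmetic ingredient on which the entire example rests.
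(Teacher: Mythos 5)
Your proposal is correct and follows essentially the same route as the paper: both apply \cref{thm:typefpn} with $k=2$ to a finite flag complex $L$ having $\pi_1(L)\cong SL_3(\Z)$, using the Lee--Szczarba computation $H_1(\Gamma(p);\Z)\cong(\Z/p\Z)^8$ for congruence covers so that rational and mod-$\ell$ homology vanish for all but finitely many heights while integral $H_1$ is nonzero infinitely often. The only differences are cosmetic: you place the cover for the $n$-th prime at height $n$ (and $\tilde L$ at $n\le 0$) rather than placing the level-$n$ congruence cover at prime heights $n$ as the paper does, and your reduction from arbitrary fields to prime subfields matches the paper's remark preceding the theorem.
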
 

In fact, it is enough to study $\Q$ and $\Z/p\Z$ for all primes $p$. Since if $P$ is the prime subfield of $F$, then $FP_2(P)$ implies $FP_2(F)$. 
Thus, we prove the following:

\begin{customthm}{\cref{thm:acyclicoverQandP}}
	There exist groups that are of type  $FP_2(\Q)$ and $FP_2(\Z/p\Z)$ for all primes $p$ that are not $FP_2(\Z)$. 
\end{customthm}

Moreover, we are able to prove the above theorem for arbitrary sets of primes. 

\begin{customthm}{\cref{thm:setofprimes}}
	Let $S$ be a set of primes. Then there exists a group which is type $FP_2(\Z/p\Z)$ if and only if $p\notin S$. 
\end{customthm}

We highlight one particularly novel corollary to this theorem:
\begin{corollary}
	There exists a group $G$ that is type $FP_2(\Q)$ but not of type $FP_2(\Z/p\Z)$ for any prime $p$. 
\end{corollary}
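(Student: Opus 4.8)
The plan is to read this off \cref{thm:setofprimes} by taking $S$ to be the set of \emph{all} primes, and then to supply the one ingredient that statement does not address, namely type $FP_2(\Q)$. Applying \cref{thm:setofprimes} with $S$ equal to the set of all primes produces a group $G = G_L(\sigma)$ that is $FP_2(\Z/p\Z)$ if and only if $p \notin S$; since every prime lies in $S$, this is exactly the assertion that $G$ is of type $FP_2(\Z/p\Z)$ for no prime $p$. It therefore remains only to verify that this same group $G$ is of type $FP_2(\Q)$, and the corollary follows.

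For the $\Q$ statement I would invoke the criterion of \cref{thm:typefpn} with $R = \Q$ and $k = 2$. In the branched-cover construction the deck groups $\pi_1(L)/\pi_1(\sigma(n))$ are finite, hence finitely presented and in particular of type $FP_2(\Q)$, so the hypothesis of \cref{thm:typefpn} is met. The criterion then demands that $\tilde H_i(\sigma(n);\Q)$ vanish for all but finitely many pairs $(i,n)$ with $i < 2$, i.e. for $i\in\{0,1\}$. Each cover $\sigma(n)$ is connected, so $\tilde H_0(\sigma(n);\Q) = 0$ for every $n$. The essential observation is that the construction behind \cref{thm:setofprimes} may be run so that every $\sigma(n)$ has \emph{finite} first integral homology; since $\Q$ is flat we have $H_1(\sigma(n);\Q)\cong H_1(\sigma(n);\Z)\otimes_{\Z}\Q$, and a finite abelian group is annihilated by $\otimes_{\Z}\Q$, whence $\tilde H_1(\sigma(n);\Q) = 0$ for every $n$. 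Thus both relevant homology groups vanish for \emph{all} $n$, a fortiori for all but finitely many, and \cref{thm:typefpn} gives $FP_2(\Q)$.

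The one point requiring care, and the place where the content really sits, is that the obstruction to $FP_2(\Z/p\Z)$ is governed by the mod-$p$ reduction $H_1(\sigma(n);\Z)\otimes\Z/p\Z$, whereas the obstruction to $FP_2(\Q)$ is governed by the free rank. If any $\sigma(n)$ carried a free summand in $H_1$ it would force $H_1(\sigma(n);\Z/p\Z)\neq 0$ for \emph{every} prime at once, so the fine control of primes required for general $S$ already compels one to use torsion covers; for $S$ equal to all primes we simply retain that choice, keeping every cover $1$-acyclic over $\Q$. The genuine obstacle, inherited from the proof of \cref{thm:setofprimes}, is the simultaneous realization: one must diagonalize over the countably many primes, building the sequence $\sigma$ from finite normal covers of the \emph{single} flag complex $L$ so that for each prime $p$ infinitely many covers $\sigma(n)$ exhibit $p$-torsion in $H_1$, while each individual cover remains rationally $1$-acyclic. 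This is exactly what the flexibility of \cref{constrmain} — allowing the cover taken at each height to vary — makes possible, and it is what lets the example evade the finite-complex obstruction that, as recalled in the introduction for the Bestvina--Brady groups $BB_L$, forces $FP_2(\Z/p\Z)$ for all $p$ to imply $FP_2(\Z)$: no single cover in our family ever needs to see all primes simultaneously.
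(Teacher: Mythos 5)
Your proof is correct and takes essentially the same route the paper intends: the corollary is \cref{thm:setofprimes} applied with $S$ the set of all primes, with type $FP_2(\Q)$ supplied by \cref{thm:typefpn} exactly as you argue, since every cover in that construction ($\bar{L}$ and the congruence covers $L_p$, all finite-index with finite deck groups) has finite first integral homology and hence $\tilde{H}_i(\sigma(n);\Q)=0$ for $i<2$ and all $n$. Your closing observation that the prime-by-prime torsion is what lets the example evade the finite-complex obstruction for $BB_L$ is accurate and consistent with the paper's discussion.
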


The second theorem of this paper concerns the constructions from \cite{Lea, Kropholler}. 
In both papers, uncountably many groups of type $FP_2(\Z)$ are constructed by considering subpresentations of an initial group that is known to be of type $FP_2(\Z)$. 
One may believe removing relators from an almost finitely presented group should result in an almost finitely presented group. 
It is clear that one has to be careful when removing relations from a group. 
Indeed, every $n$ generated group appears as a subpresentation of the trivial group presented as $\langle x_1, \dots, x_n\mid F(x_1, \dots, x_n)\rangle$. 

However, in the examples from \cite{Lea, Kropholler} care is taken when considering subpresentations. 
In both cases a generating set for the relation module of the initial group is retained and this is enough to ensure the resulting group is of type $FP_2(\Z)$.
For finitely presented groups, it is true that there is a finite set of relations, that if retained ensure that the subpresentation gives a finitely presented group. 
 
It is then of interest to know whether if one retains an appropriate finite set of relations, say generators for the relation module, does one retain the property of being of type $FP_2(\Z)$? We show that this is not the case: 
\begin{customthm}{\cref{thm:subpresnotfp2}}
	There exists a presentation $G = \langle X\mid R\rangle$ of a group of type $FP_2(\Z)$ such that for any finite subset $T\subset R$, we can find $S$ with $T\subset S\subset R$ and $\langle X\mid S\rangle$ is not of type $FP_2(R)$ for any ring $R$.
\end{customthm}

{\bf Acknowledgements: } The author is thankful to Kevin Schreve for posing questions leading to this article. The author is also grateful to Peter Kropholler and Ian Leary for helpful conversations. 
The author was funded by the Deutsche Forschungsgemeinschaft (DFG, German Research Foundation) under Germany's Excellence Strategy EXC 2044--390685587, Mathematics M\"unster: Dynamics--Geometry--Structure.

\section{Preliminaries}

\subsection{Flag complexes and spherical doubles}

\begin{definition}
	Let $L$ be flag complex. 
	The \emph{spherical double} of $L$, denoted $\S(L)$, is defined by replacing every simplex of $L$ by an appropriately triangulated sphere of the same dimension, in the following way.

	Let $\{v_1,\dots,v_n\}$ be the vertices of $L$.
	The vertex set of $\S(L)$ is a set $\{v_1^+,v_1^-,\dots,v_n^+,v_n^-\}$.
	Thus, each $0$--simplex $\{v_i\}$ of $L$ corresponds to a $0$--sphere $\{v_i^+,v_i^-\}$ in $\S(L)$.
	If $\tau\subset L$ is an $m$--dimensional simplex of $L$, it can be represented as the join $\tau=\{v_{i_0}\}*\dots*\{v_{i_m}\}$ of a collection of $m+1$ vertices of $L$.
	Let $\S(\tau)$ be the join $\S(\tau)=\{v_{i_0}^+,v_{i_0}^-\}*\dots*\{v_{i_m}^+,v_{i_m}^-\}$.
	We have that, $\S(\tau)$ is homeomorphic to an $m$--dimensional sphere.
	We define $\S(L)=\bigcup_{\tau\subseteq L}\S(\tau)$, where $\tau$ ranges through all simplices of $L$.
\end{definition}

It can be shown that $\S(L)$ is a simplicial complex, which is flag if and only if $L$ is flag \cite[Lemma~5.8]{BeBr}.
Also the map $v_i^t\mapsto v_i$, gives a retraction $\S(L)\to L$. 

The following two results are analogous to Proposition 7.1 and Corollary 7.2 of \cite{Lea}, we include the proofs for completeness. 
\begin{proposition}\label{prop:covers}
	Let $L$ be a flag complex. 
	Let $\bar{L}$ be a cover of $L$.
	Then $\S(\bar{L})$ is a cover of $\S(L)$.  
\end{proposition}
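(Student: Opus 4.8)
The plan is to promote the covering map $p\colon \bar L\to L$ to a simplicial map $\S(p)\colon \S(\bar L)\to\S(L)$ and then verify the standard link criterion for covering maps. Writing $\{w_j\}$ for the vertices of $\bar L$ and $\{v_i\}$ for those of $L$, I would define $\S(p)$ on vertices by $\S(p)(w_j^{\pm})=v_i^{\pm}$ whenever $p(w_j)=v_i$; that is, $\S(p)$ copies $p$ while preserving the $\pm$ decoration. The first routine step is to check that this is simplicial: a simplex of $\S(\bar L)$ has the form $\{w_{j_0}^{t_0},\dots,w_{j_m}^{t_m}\}$ with $\{w_{j_0},\dots,w_{j_m}\}$ a simplex of $\bar L$, and since $p$ is a simplicial covering it carries this simplex isomorphically to a simplex $\{v_{i_0},\dots,v_{i_m}\}$ of $L$ with the $v_{i_l}$ distinct, so the image $\{v_{i_0}^{t_0},\dots,v_{i_m}^{t_m}\}$ is a face of $\S(\{v_{i_0},\dots,v_{i_m}\})$ and hence a simplex of $\S(L)$.

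The heart of the argument is the local model. I would first record the identity $\lk(v^{\pm},\S(L))=\S(\lk(v,L))$ for every vertex $v$ of $L$: a simplex $\{u_0^{s_0},\dots,u_r^{s_r}\}$ lies in $\lk(v^{+},\S(L))$ exactly when $\{v,u_0,\dots,u_r\}$ is a simplex of $L$ with all vertices distinct, i.e.\ when $\{u_0,\dots,u_r\}$ is a simplex of $\lk(v,L)$, and this is precisely the condition defining $\S(\lk(v,L))$; the same computation applies to $v^{-}$. Because $p$ is a covering, it restricts to a simplicial isomorphism $\lk(w_j,\bar L)\xrightarrow{\ \cong\ }\lk(v_i,L)$. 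Applying the (manifestly functorial, sign-preserving) spherical-double construction to this isomorphism shows that $\S(p)$ restricts to an isomorphism
$$\lk(w_j^{\pm},\S(\bar L))=\S(\lk(w_j,\bar L))\xrightarrow{\ \cong\ }\S(\lk(v_i,L))=\lk(v_i^{\pm},\S(L)).$$
Since a simplicial map between simplicial complexes is a covering map precisely when it restricts to an isomorphism on the star, equivalently the link, of every vertex, this identifies $\S(p)$ as a covering map, as desired.

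The main obstacle, such as it is, lies in setting up the spherical-double construction carefully enough to make it genuinely functorial on the maps in play, and in verifying the link identity $\lk(v^{\pm},\S(L))=\S(\lk(v,L))$ cleanly while keeping track of the $\pm$ labels throughout. Once that bookkeeping is in place, the covering property drops out immediately from the local injectivity of $p$, namely from the fact that $p$ is an isomorphism on each link, so no global or connectivity hypotheses beyond those already implicit in ``$\bar L$ is a cover of $L$'' are required.
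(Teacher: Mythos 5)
Your proof is correct, but it takes a genuinely different route from the paper. The paper's argument is categorical and takes two lines: it observes that $\S(\bar{L})$ sits in a commutative square over the retraction $r\colon \S(L)\to L$, $v_i^{\pm}\mapsto v_i$, and identifies $\S(\bar{L})\to\S(L)$ as the pullback of the covering $\bar{L}\to L$ along $r$; since coverings are stable under pullback, the conclusion is immediate. Your argument instead builds the map $\S(p)$ explicitly and verifies the combinatorial local criterion for simplicial coverings, with the key identity $\lk(v^{\pm},\S(L))=\S(\lk(v,L))$ doing the work. Each approach buys something: yours gives an explicit description of the covering map and its behavior on links (and avoids having to check that the simplicial double really agrees with the topological pullback, a point the paper asserts without proof), while the paper's pullback square is not only shorter but is reused immediately --- the same diagram is what drives the proof of \cref{cor:subgroupscovers}, identifying $\pi_1(\S(\bar{L}))$ with $r_*^{-1}(\pi_1(\bar{L}))$, a statement your link-by-link argument does not directly produce. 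Two small points of hygiene in your write-up: the phrase ``covering map precisely when it restricts to an isomorphism on the star of every vertex'' needs surjectivity for the ``only if'' reading, though you only use the ``if'' direction and surjectivity of $\S(p)$ follows from surjectivity of $p$ on vertices together with the star isomorphisms; and when you pass from link isomorphisms to star isomorphisms you are implicitly using that the closed star is the join of the vertex with its link and that $\S$ is functorial only on injective simplicial maps --- both fine here, since $p$ is nondegenerate on simplices, but worth stating.
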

\begin{proof}
	This follows since $\S(\bar{L})$ can be seen as the pullback in the square:
	$$\begin{tikzcd}
	\S(\bar{L})\arrow[r]\arrow[d]& \S(L)\arrow[d, "r"] \\
	\bar{L}\arrow[r]& L
	\end{tikzcd}$$
	The lower map is a covering this, we have a pull back of a covering map which is also a covering map. 
\end{proof}

\begin{corollary}\label{cor:subgroupscovers}
	Let $r\colon \S(L)\to L$ be the retraction above. 
	Then $\pi_1(\S(\bar{L})) = r_*^{-1}(\pi_1(\bar{L}))$.
\end{corollary}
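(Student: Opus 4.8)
The plan is to read off the statement directly from the pullback description of $\S(\bar L)$ established in \cref{prop:covers}, combined with the lifting criterion from covering space theory. Recall from that proof that $\S(\bar L)$ is the covering of $\S(L)$ obtained by pulling back the covering $\bar L\to L$ along the retraction $r$; write $q\colon \S(\bar L)\to\S(L)$ for this covering map. The goal is then to identify the image $q_*\pi_1(\S(\bar L))$ with $r_*^{-1}(\pi_1(\bar L))$ as subgroups of $\pi_1(\S(L))$. First I would fix compatible basepoints (a basepoint $x_0$ of $L$, a lift $\bar x_0\in\bar L$, a point of $\S(L)$ mapping to $x_0$ under $r$, and the point of $\S(\bar L)$ determined by these), so that all fundamental groups and the maps $r_*$ and $q_*$ are based consistently.

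The core step is the lifting criterion applied to the pullback. By the defining property of the fibre product, a lift of a loop $\gamma$ in $\S(L)$ to $\S(\bar L)$ is exactly a compatible pair consisting of $\gamma$ together with a lift of $r\circ\gamma$ to $\bar L$, and this pair closes up to a loop precisely when the lift of $r\circ\gamma$ closes up. Since $r\circ\gamma$ lifts to a loop in $\bar L$ if and only if $[r\circ\gamma]=r_*[\gamma]\in\pi_1(\bar L)$, the standard characterization of $q_*\pi_1(\S(\bar L))$ as the set of classes whose loops lift gives $[\gamma]\in q_*\pi_1(\S(\bar L))$ iff $r_*[\gamma]\in\pi_1(\bar L)$, i.e.\ iff $[\gamma]\in r_*^{-1}(\pi_1(\bar L))$. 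This yields the claimed equality.

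The one point genuinely requiring care, and which I expect to be the main obstacle, is connectivity of the pullback $\S(\bar L)$: the statement tacitly treats $\pi_1(\S(\bar L))$ as a single subgroup, so I must verify that $\S(\bar L)$ does not break into several components. Here the retraction structure is decisive. The map $r$ admits a section $s\colon L\to\S(L)$ sending $v_i\mapsto v_i^+$, with $r\circ s=\operatorname{id}_L$, so $r_*$ is surjective. The number of sheets of $q$ equals the index $[\pi_1(L):\pi_1(\bar L)]$, and $\S(\bar L)$ is connected iff the monodromy action of $\pi_1(\S(L))$ on the fibre $\pi_1(L)/\pi_1(\bar L)$ (obtained by composing $r_*$ with the monodromy of $\bar L\to L$) is transitive; since $r_*$ is onto, the full group $\pi_1(L)$ acts, and its action on its own coset space is transitive, so $\S(\bar L)$ is connected. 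If one does not wish to assume connectivity of $L$ or $\S(L)$ as a standing hypothesis, the same lifting argument applies verbatim on the component containing the chosen basepoint.
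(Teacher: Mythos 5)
Your proof is correct and follows essentially the same route as the paper: both identify $\S(\bar L)$ as the pullback from \cref{prop:covers} and characterize which loops in $\S(L)$ lift to loops in $\S(\bar L)$ by whether $r\circ\gamma$ lifts to a loop in $\bar L$, which is exactly the condition $r_*[\gamma]\in\pi_1(\bar L)$. Your additional verification that the pullback is connected (via the section $v_i\mapsto v_i^+$, surjectivity of $r_*$, and transitivity of the monodromy on $\pi_1(L)/\pi_1(\bar L)$) is a point the paper leaves implicit, and it is a worthwhile bit of extra rigor rather than a different argument.
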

\begin{proof}
	Since the diagram in the proof of \Cref{prop:covers} commutes we see that $\pi_1(\S(\bar{L})) \subset r_*^{-1}(\pi_1(\bar{L}))$.
	
	Let $\gamma$ be a loop in $\S({L})$ such that $r\circ \gamma$ is an element of $\pi_1(\bar{L})$.
	Then we can lift $r\circ\gamma$ to a loop $\gamma'$ in $\bar{L}$. 
	Then $(\gamma, \gamma')$ defines a loop in $\S(\bar{L})$ which maps to $\gamma$. 
	Thus $\pi_1(\S(\bar{L})) \supset r_*^{-1}(\pi_1(\bar{L}))$.
\end{proof}

\subsection{Morse theory}

For full details, we refer the reader to \cite{BeBr}.  

A map $f\colon X\to\R$ defined on a cube complex $X$ is a \emph{Morse function} if 
\begin{itemize}
\item for every cell $e$ of $X$, with characteristic map $\chi_e\colon [0,1]^m\to e$, the composition $f\circ\chi_e\colon [0,1]^m\to\R$ extends to an affine map $\R^m\to \R$ and $f\circ\chi_e$ is constant only when $\dim e=0$;
\item the image of the $0$--skeleton of $X$ is discrete in $\R$.
\end{itemize}

Suppose $X$ is a cube complex, $f\colon X\to \R$ a Morse function.
The {\em ascending link} of a vertex $v$, denoted $\Lk_{\uparrow}(v, X)$ is the subcomplex of $\Lk(v, X)$ corresponding to cubes $C$ such that $f|_C$ attains its minimum at $v$. The {\em descending link}, $\Lk_{\downarrow}(v, X)$, is defined similarly replacing minimum with maximum. 

\subsection{Right-angled Artin and Bestvina-Brady groups}

\begin{definition}
	Let $L$ be a flag complex.
	The {\em right-angled Artin group}, or RAAG, associated to $L$ is given by the presentation:
	\[
	A_L=\langle v\in\Vertices(L)  \mid [a_i,a_j]=1 \text{ if } \{a_i,a_j\} \text{ is an edge of }L\rangle.
	\] 
\end{definition}

Given a right-angled Artin group $A_L$, the {\em Salvetti complex}, $S_L$ associated to $A_L$ is a cube complex $S_L$ defined as follows.
For each $v\in\Vertices(L)$ let $S^1_{v}$ be a circle endowed with a structure of a CW complex having a single $0$--cell and a single $1$--cell.
Let $T=\prod_{v} S^1_{v}$ be an $n$--dimensional torus with the product CW structure.
For every simplex $K\subset L$, define a $k$--dimensional torus $T_K$ as a Cartesian product of CW complexes:
$T_K=\prod_{v\in K} S^1_{v}$
and observe that $T_K$ can be identified as a combinatorial subcomplex of $T$.
Then the {\em Salvetti complex} is 
\[
S_L=\bigcup\big\{T_K\subset T\mid K \text{ is a simplex of }L\big\}.
\]

The link of the single vertex of $S_L$ is $\S(L)$.
This is a flag simplicial complex, and hence $S_L$ is a non-positively curved cube complex. 
It follows that the universal cover $X_L = \tilde S_L$ is a CAT(0) cube complex. 

We can define a homomorphism $\phi\colon A_L\to\Z$ by sending each generator to 1. 
We can realise this topologically as a map $f\colon S_L\to S^1$ by restricting the map $\hat{f}\colon T\to S^1$ given by $(x_1, \dots, x_n)\mapsto x_1 + \dots + x_n$.   
Let $BB_L\vcentcolon = \ker(\phi)$. 
We can lift $f$ to a map which we also call $f\colon X_L/BB_L\to \R$. 
This is a Morse function. 
The ascending and decending links are copies of $L$ spanned by $v_i^+$ and $v_i^-$ respectively.

\section{New sequences of covers}

In \cite{Lea}, branched covers of $X_L/BB_L$ were taken to obtain uncountably many groups $G_L(S)$ depending on $S\subset\Z$, whose finiteness properties are controlled by the topology of $L$.
In this section, we generalise this machinery to construct various new groups.

Throughout, let $L$ be a flag complex. 
Let $\CC$ be the set of normal covers of $L$. 
Let $\CZ = \CC^\Z$ be the set of functions $\sigma\colon\Z\to \CC$.
Define a partial ordering on $\CZ$ by $\sigma \preceq \sigma'$ if $\sigma(n)$ is a cover of $\sigma'(n)$. 

We will associate to each element $\sigma\in \CZ$ a group as follows.

{\bf Construction of $G_L(\sigma)$:} 
For each integer $n$ there is a single vertex $v$ of $X_L/BB_L$ such that $f(v) = n$. 
Let $\gamma_{i, n}$ be a sequence of loops in $\S(L)$ that normally generate $G_n = \pi_1(\S(\sigma(n)))\leq \pi_1(\S(L))$. 
Let $V$ be the vertex set of $X_L/BB_L$ and let $V(\sigma) \vcentcolon = \{v\in X_L/BB_L\mid \sigma(f(v))\neq L\}\subset V$. 
By identifying $\Lk(v, X)$ with the boundary of the $\frac{1}{4}$ neighbourhood of $v$ we can consider $\gamma_{i, n}$ as loops in $X_L/BB_L$. 
Let $Y_L(\sigma)$ be the complex obtained from $(X_L/BB_L)\smallsetminus V(\sigma)$ by attaching disks to all the loops $\gamma_{i, n}$. 
Let $G_L(\sigma) = \pi_1(Y_L(\sigma))$. 

We can also view the group $G_L(\sigma)$ as a group of deck transformations of a branched cover of cube complexes. 
\begin{theorem}
	There is a CAT(0) cube complex $X_L^{\sigma}$ with a branched covering map $b\colon X_L^\sigma\to X_L/BB_L$ such that $G_L(\sigma)$ is the group of deck transformations of this branched cover. 
\end{theorem}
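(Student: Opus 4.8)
The plan is to realize $X_L^\sigma$ as the completion of a cover of the complement of the branch locus, following the branched-cover machinery of \cite{Lea} but replacing the universal cover $\tilde L$ by the normal covers $\sigma(n)$. First I would set $W = (X_L/BB_L)\smallsetminus V(\sigma)$, the complement of the vertices we branch over. Attaching the disks to the loops $\gamma_{i,n}$ presents $G_L(\sigma)=\pi_1(Y_L(\sigma))$ as the quotient $\pi_1(W)/\langle\langle \gamma_{i,n}\rangle\rangle$, so there is a canonical surjection $q\colon \pi_1(W)\twoheadrightarrow G_L(\sigma)$ whose kernel is the normal closure of the $\gamma_{i,n}$. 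Let $\hat W\to W$ be the regular cover associated to $\ker q$; it is again a cube complex, it is connected, and its deck group is $\pi_1(W)/\ker q = G_L(\sigma)$. The complex $X_L^\sigma$ is then obtained by completing $\hat W$: over each $v\in V(\sigma)$ one fills in the punctures of $\hat W$ lying above $v$ by coning them off, and $b$ is the extension of $\hat W\to W$ that sends each cone to the star of $v$.

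The core local computation is to identify what lies above a branch vertex $v$ with $f(v)=n$. A punctured neighborhood of $v$ in $W$ deformation retracts to $\Lk(v,X_L/BB_L)=\S(L)$, so the restriction of $\hat W$ there is the cover of $\S(L)$ determined by $H\vcentcolon=\pi_1(\S(L))\cap\ker q$. Since the $\gamma_{i,n}$ are loops in $\S(L)$ normally generating $G_n=\pi_1(\S(\sigma(n)))$, and since $\sigma(n)$ is a normal cover of $L$, \Cref{prop:covers} and \Cref{cor:subgroupscovers} show that the connected cover of $\S(L)$ with fundamental group $G_n$ is exactly $\S(\sigma(n))$. I would then argue $H=G_n$, so that each puncture above $v$ is a copy of $\S(\sigma(n))$; coning it off yields a vertex whose link is $\S(\sigma(n))$, with ascending and descending links $\sigma(n)$, matching \Cref{constrmain}. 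The cube structure extends across the cones because each cone is the star of the new vertex glued along the cube structure already carried by its link, so $b$ is cubical, restricts to a covering over $W$, and is a branched covering.

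It remains to verify that $X_L^\sigma$ is $\mathrm{CAT}(0)$ and to pin down the deck group. For non-positive curvature I would check Gromov's link condition: every link is either $\S(L)$ (at lifts of unbranched vertices, whose open stars are contractible hence evenly covered, so lift isomorphically) or $\S(\sigma(n))$ (at branch vertices). As $L$ is flag, $\S(L)$ is flag by \cite[Lemma~5.8]{BeBr}, and covers of flag complexes are flag, so every link is flag. For simple connectivity I would push an arbitrary loop off the branch vertices into $\hat W$; its class lies in $\pi_1(\hat W)=\ker q$, which is normally generated in $\pi_1(W)$ by the $\gamma_{i,n}$. Each lift of a $\gamma_{i,n}$ is a loop sitting in a link $\S(\sigma(n))$ that bounds one of the adjoined cones, hence is nullhomotopic in $X_L^\sigma$; running this over all branch points (equivalently, over all $G_L(\sigma)$-translates, which together generate $\ker q$) kills $\pi_1(\hat W)$. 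A simply connected non-positively curved cube complex is $\mathrm{CAT}(0)$. Finally, the deck transformations of $\hat W\to W$ permute the punctures above each $v$ compatibly, so extend over the cones to deck transformations of $b$, while every deck transformation of $b$ restricts to one of $\hat W\to W$; hence $\operatorname{Deck}(b)=G_L(\sigma)$.

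The main obstacle is the local identification $H=G_n$: a priori a loop supported in the link of $v$ could become nullhomotopic in $G_L(\sigma)$ through relators $\gamma_{i,m}$ attached near other vertices with $m\neq n$, which would enlarge the branch link beyond $\S(\sigma(n))$. Ruling this out requires the geometric fact, implicit in Leary's argument, that relators at distinct heights are supported in disjoint regions, so that $\pi_1(\S(L))\cap\ker q$ only sees the local generators $\gamma_{i,n}$. Checking that the coning is globally consistent, so that $X_L^\sigma$ is genuinely a cube complex rather than merely a CW complex, is the other point demanding care; both steps follow the template of \cite{Lea} with $\tilde L$ replaced by $\sigma(n)$.
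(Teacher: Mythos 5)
Your proposal is correct and is essentially the paper's own proof: your regular cover $\hat W\to W$ corresponding to $\ker q$ is exactly the paper's $Z_L(\sigma)=\widetilde{Y_L(\sigma)}\smallsetminus\tilde{\mathcal{D}}$, and completing over the punctures, verifying the link condition via flagness of covers of $\S(L)$, killing $\pi_1$ through the cone points, and identifying the deck group with that of $\widetilde{Y_L(\sigma)}\to Y_L(\sigma)$ is precisely the paper's argument. The local identification $H=G_n$ that you flag as the main obstacle is not actually needed for this theorem, since flagness of an arbitrary cover of $\S(L)$ already gives non-positive curvature; the paper isolates that identification in the subsequent lemma, resolving your concern about relators at other heights by retracting $X_L^\sigma\smallsetminus\{v\}$ onto a punctured neighbourhood of $v$ with only the local disks $\mathcal{D}_v$ attached.
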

\begin{proof}
	We construct $X_L^\sigma$ as follows. 
	Let $\widetilde{Y_L(\sigma)}$ be the universal cover of $Y_L(\sigma)$. 
	Let $\mathcal{D}$ be the collection of open disks added to $(X_L/BB_L)\smallsetminus V(\sigma)$ to obtain $Y_L(\sigma)$ and let $\tilde{\mathcal{D}}$ be the collection of lifts of $\mathcal{D}$ to $\widetilde{Y_L(\sigma)}$. 
	Let $Z_L(\sigma)$ be $\widetilde{Y_L(\sigma)}\smallsetminus\tilde{\mathcal{D}}$. 
	Then the covering map $p\colon \widetilde{Y_L(\sigma)}\to Y_L(\sigma)$ restricts to a covering map $Z_L\to (X_L/BB_L)\smallsetminus V(\sigma)$. 
	We can now lift the metric and complete to obtain a branched cover $b\colon X_L^\sigma\to X_L/BB_L$.
	The deck group is exactly the deck group of the covering $\widetilde{Y_L(\sigma)}\to Y_L(\sigma)$, this is  $G_L(\sigma)$. 
	
	Given a vertex $v\in X_L^\sigma$ we obtain a covering map $b_v\colon \Lk(v, X_L^\sigma)\to \Lk(b(v), X_L)$. 
	Since the cover of a flag complex is a flag complex we see that $X_L^\sigma$ is non-positively curved. 
	
	Taking the completion adds in the missing vertices of $Z_L$. 
	The vertices added cone off their links. 
	As such the boundary of each disk in $\tilde{\mathcal{D}}$ is trivial in $X_L^\sigma$ and thus $\pi_1(\widetilde{Y_L(\sigma)})$ surjects $\pi_1(X_L^\sigma)$. 
	We conclude, $X_L^\sigma$ is simply connected.
	Thus, $X_L^{\sigma}$ is non-positively curved and simply connected and hence CAT(0). 
\end{proof}

There is a Morse function $f_\sigma\colon X_L^\sigma\to \R$ given by composition $f\circ b$. 
Since $G_L(\sigma)$ is the covering group of $Z_L\to X_L\smallsetminus V(X_L)$, we see that it acts on $X_L^\sigma$ cellularly and freely away from the vertex set. 
Moreover, $G_L(\sigma)$ acts properly, freely and cocompactly on $f_\sigma^{-1}(\frac{1}{2})$. 
Thus by understanding this level set we can understand finiteness properties of $G_L(\sigma)$. 
We will proceed by understanding the ascending and descending links of the Morse function $f_\sigma$. 

\begin{lemma}
	Let $v$ be a vertex of $X_L^\sigma$ such that $f_\sigma(v) = n$. 
	Then $\Lk(v, X_L^\sigma) = \S(\sigma(n))$ and 	$\Lk_\uparrow(v, X_L^\sigma) = \Lk_\downarrow(v, X_L^\sigma) = \sigma(n)$. 
\end{lemma}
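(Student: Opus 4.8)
The plan is to analyse the branched covering $b\colon X_L^\sigma \to X_L/BB_L$ locally at $v$, using the link covering map $b_v\colon \Lk(v, X_L^\sigma)\to \Lk(b(v), X_L/BB_L)$ produced in the previous theorem. Since $f_\sigma = f\circ b$ we have $f(b(v))=n$, so $b(v)$ is the unique height-$n$ vertex of $X_L/BB_L$, whose link is $\S(L)$ (links are preserved by the covering $X_L\to S_L$). Thus $b_v$ exhibits $\Lk(v, X_L^\sigma)$ as a cover of $\S(L)$, and the first assertion reduces to identifying which cover this is. I would show that $b_v$ is the cover corresponding to the subgroup $G_n = \pi_1(\S(\sigma(n)))\leq \pi_1(\S(L))$; by \cref{prop:covers} this cover is precisely $\S(\sigma(n))$, giving $\Lk(v, X_L^\sigma)=\S(\sigma(n))$.

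To identify the cover, recall that $\Lk(v, X_L^\sigma)$ arises as a component of the preimage of a punctured neighbourhood of $b(v)$ under the covering $Z_L \to (X_L/BB_L)\smallsetminus V(\sigma)$, so that $\pi_1(\Lk(v,X_L^\sigma))$ equals the kernel of the map $\pi_1(\S(L))\to \pi_1(Y_L(\sigma))$ induced by including the link of $b(v)$ into $Y_L(\sigma)$. One inclusion is immediate: the loops $\gamma_{i,n}$ lie in this link and bound the attached disks, so their normal closure $G_n$ is contained in the kernel. The reverse inclusion is the main obstacle. Here I would use that $X_L/BB_L$ is a non-positively curved, hence aspherical, cube complex, so that a loop in the link $\S(L)$ of $b(v)$ can be trivialised in $Y_L(\sigma)$ only on account of the disks attached at $b(v)$ itself; since these are exactly the $\gamma_{i,n}$ normally generating $G_n$, the kernel is exactly $G_n$.

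For the ascending and descending links I would use again that $f_\sigma=f\circ b$, so that $b$, and hence $b_v$, preserves the Morse direction. Consequently $\Lk_{\uparrow}(v, X_L^\sigma) = b_v^{-1}\bigl(\Lk_{\uparrow}(b(v), X_L/BB_L)\bigr)$ and likewise for the descending link. By the description in the preliminaries, the ascending and descending links of $b(v)$ are the two copies $L^+$ and $L^-$ of $L$ spanned inside $\S(L)$ by the vertices $v_i^+$ and $v_i^-$ respectively, so it remains to compute $b_v^{-1}(L^{\pm})$.

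The final step uses the pullback description of $\S(\sigma(n))$ from the proof of \cref{prop:covers}: $\S(\sigma(n))$ is the fibre product $\{(w,y) : r(w)=p(y)\}$ of $r\colon \S(L)\to L$ and the covering $p\colon \sigma(n)\to L$, with $b_v$ the projection $(w,y)\mapsto w$. Since $r$ restricts to an isomorphism $L^+\xrightarrow{\ \sim\ }L$ (and likewise on $L^-$), the preimage $b_v^{-1}(L^+)=\{(w,y) : w\in L^+,\ r(w)=p(y)\}$ is carried isomorphically onto $\sigma(n)$ by the projection $(w,y)\mapsto y$: for each $y\in\sigma(n)$ there is a unique $w\in L^+$ with $r(w)=p(y)$. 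The same argument applies to $L^-$, yielding $\Lk_{\uparrow}(v, X_L^\sigma)=\Lk_{\downarrow}(v, X_L^\sigma)=\sigma(n)$ and completing the proof. The one genuinely delicate point is the reverse inclusion in the second paragraph; everything else is a formal consequence of the covering-space and pullback descriptions already established.
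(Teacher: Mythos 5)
Your overall strategy is reasonable, and two of its three parts are fine: the reduction of the first assertion to identifying $\pi_1(\Lk(v,X_L^\sigma))$ with the kernel $K$ of $\pi_1(\S(L))\to\pi_1(Y_L(\sigma))$ is sound (modulo basepoints), and your final pullback computation of the ascending and descending links, using that $r$ restricts to isomorphisms $L^{\pm}\to L$, is correct and makes explicit what the paper states as ``the ascending link is the preimage of $L$''. But the step you yourself flag as delicate --- the inclusion $K\subseteq G_n$ --- is a genuine gap, and the appeal to asphericity of $X_L/BB_L$ cannot close it. Asphericity controls $\pi_{\geq 2}$ of the cube complex; it says nothing about which link loops die once $2$-cells are attached at \emph{other} vertices. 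A nullhomotopy in $Y_L(\sigma)$ of a loop in the link of $b(v)$ is free to wander through the punctured complex, pass into the links of vertices at other heights, and use the disks glued there along the $\gamma_{i,m}$ with $m\neq n$; nothing in your argument rules out that such homotopies kill more than $G_n$. Indeed, loops do get moved between links at different heights in this complex --- this is exactly the mechanism in \cref{lem:trivial}, where the loop $\gamma^{[n]}$, supported far from the height-$n$ vertex, is homotoped into its link --- so the assertion that trivialisation can happen ``only on account of the disks attached at $b(v)$ itself'' is precisely the content of the lemma, not a formal consequence of asphericity.

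The paper closes this gap by working in the universal cover rather than downstairs. Since $X_L^\sigma$ is CAT(0), geodesic contraction toward $v$ gives a retraction $X_L^\sigma\smallsetminus\{v\}\to N(v,X_L^\sigma)\smallsetminus\{v\}$; the attaching loop of any disk of $\tilde{\mathcal{D}}$ glued at a vertex $w\neq v$ cones off at $w$ inside $X_L^\sigma\smallsetminus\{v\}$, so the retraction extends over all such disks, yielding a retraction of the simply connected space $\widetilde{Y_L(\sigma)}$ onto $Y_L(\sigma,v)=(N(v,X_L^\sigma)\smallsetminus\{v\})\cup\mathcal{D}_v$, where $\mathcal{D}_v$ is the set of disks glued at $v$. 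Hence $Y_L(\sigma,v)$ is simply connected; it is homotopy equivalent to the cover $\overline{\S(L)}$ corresponding to $K$ with disks attached along all lifts of the $\gamma_{i,n}$ alone, and since $G_n=\langle\langle\gamma_{i,n}\rangle\rangle$ is normal in $\pi_1(\S(L))$, simple connectivity forces $K=G_n$, whence $\Lk(v,X_L^\sigma)=\S(\sigma(n))$ by \cref{prop:covers}. If you want to keep your formulation, you need some substitute for this retraction --- for instance a disk-diagram or tower argument localising nullhomotopies at $v$ --- since as written your second paragraph asserts the key statement rather than proving it.
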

\begin{proof}
	Let $\mathcal{D}_v$ be the collection of disks glued at $v$ in $\widetilde{Y_L(\sigma)}$. 
	There is a retraction $X_L^\sigma\smallsetminus \{v\}\to N(v, X_L^\sigma)\smallsetminus \{v\}$ where $N(v, X_L^\sigma)$ is a neighbourhood of $v$. 
	Let $\gamma$ be the attaching map of a disk in $\tilde{\mathcal{D}}\smallsetminus\mathcal{D}_v$, then this bounds a disk in $X_L^\sigma\smallsetminus \{v\}$. 
	Thus we can extend the retraction over elements of $\tilde{\mathcal{D}}\smallsetminus\mathcal{D}_v$. 
	This gives a retraction $\widetilde{Y_L(\sigma)}\to Y_L(\sigma, v)$, where $Y_L(\sigma, v) = (N(v, X_L^\sigma)\smallsetminus \{v\})\cup \mathcal{D}_v$
	Since the former is simply connected, so is the latter. 
	
	We see that $Y_L(\sigma, v)$ is homotopy equivalent to a cover $\overline{\S({L})}$ of $\S(L)$ together with disks glued to each lift of $\gamma_{i, n}$. 
	Since this is simply connected we see that $\overline{\S({L})}$ is the cover corresponding $\langle \langle\gamma_{i, n}\rangle\rangle$. 
	Since $\{ \gamma_{i, n}\}$ normally generates $\pi_1(\S(\sigma(n)))$ we see that $\overline{\S({L})} = \S(\sigma(n))$ and the ascending link is the preimage of $L$ which is exactly $\sigma(n)$. 
	Similarly the descending link is $\sigma(n)$. 
\end{proof}

This allows us to understand the finiteness properties of $G_L(\sigma)$. 

Firstly, we recall a simplified version of Brown's criterion \cite{BrownCrit} (from \cite{Lea}) for a group to be of type $FP_k(R)$. 
\begin{theorem}\label{thm:brown}
	Suppose that $X$ is a finite-dimensional $R$-acyclic G-CW-complex, and that $G$ acts freely except possibly that some vertices have isotropy subgroups that are of type $FP(R)$ (resp. $FP_k(R)$).  Suppose also that $X = \cup_{m\in \N} X(m)$ where $X(m)\subset X(m+1)\subset\dots\subset X$ is an ascending sequence of $G$-subcomplexes, each of which contains only finitely many orbits of cells. In this case $G$ is $FP(R)$ (resp. $FP_k(R)$) if and only if for all $i$ (resp. for all $i < k$) the sequence $\tilde{H}_i(X(m); R)$ of reduced homology groups is essentially trivial.
\end{theorem}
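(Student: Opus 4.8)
The plan is to deduce this statement, which is the classical criterion of Brown, from the homological algebra of the cellular chain complex, using $R$-acyclicity to turn $X$ into a resolution of the trivial module and the filtration to detect finite generation. First I would work with the augmented cellular chain complex $C_* = C_*(X;R)$. Because $X$ is $R$-acyclic, the sequence $\cdots\to C_1\to C_0\xrightarrow{\epsilon} R\to 0$ is an exact sequence of $RG$-modules, that is, a resolution of $R$; finite-dimensionality of $X$ makes it bounded. Decomposing $C_n$ over the $G$-orbits of $n$-cells gives $C_n\cong\bigoplus_\sigma R[G/G_\sigma]$, where $G_\sigma$ is the stabiliser of a representative cell. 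By hypothesis each $G_\sigma$ is trivial (free cells) or of type $FP(R)$ (resp.\ $FP_k(R)$), and since $R[G/H]=\mathrm{Ind}_H^G R$ is of type $FP(R)$ (resp.\ $FP_k(R)$) over $RG$ exactly when $H$ is, each orbit summand has the corresponding finiteness type. The filtration $X=\bigcup_m X(m)$ filters $C_*$ by subcomplexes $C_*(X(m))$; as $X(m)$ has finitely many orbits of cells, each $C_n(X(m))$ is a finite direct sum of such summands, hence a finitely generated $RG$-module of type $FP(R)$ (resp.\ $FP_k(R)$), and $C_n=\varinjlim_m C_n(X(m))$.

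Recall that ``essentially trivial'' means that for each $m$ there is $m'\geq m$ with $\tilde H_i(X(m);R)\to\tilde H_i(X(m');R)$ the zero map. For the direction ``essentially trivial $\Rightarrow FP_k(R)$'' I would build a finitely generated projective partial resolution of $R$ by induction on the degree. At each stage the module of cycles that must be killed is finitely generated provided the obstructing reduced homology has already died at a finite filtration stage; essential triviality in degrees $i<k$ is precisely what guarantees that each syzygy is captured inside some $C_*(X(m))$, which is finitely generated of type $FP(R)$ (resp.\ $FP_k(R)$) and whose own syzygies can therefore be taken finitely generated. Splicing these finite stages yields finitely generated projectives $P_0,\dots,P_k$ with $P_*\to R$ exact through degree $k$, giving type $FP_k(R)$; for the $FP(R)$ version, finite-dimensionality of $X$ bounds the length of the resolution while essential triviality in \emph{every} degree closes it off.

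For the converse ``$FP_k(R)\Rightarrow$ essentially trivial'' I would take a partial projective resolution $P_*\to R$ with $P_i$ finitely generated for $i\leq k$ and compare it with $C_*$. A chain map $P_*\to C_*$ lifting the identity of $R$ exists because the $P_i$ are projective and $C_*$ is acyclic, and finite generation of $P_0,\dots,P_k$ forces its image to lie in a single finite stage $C_*(X(m_0))$, since $C_i=\varinjlim_m C_i(X(m))$. A diagram chase in the directed system of complexes $C_*(X(m))$ — using that $\varinjlim_m\tilde H_i(X(m))=\tilde H_i(X)=0$ by acyclicity, together with the factorisation through $X(m_0)$ — then shows that the connecting maps $\tilde H_i(X(m))\to\tilde H_i(X(m'))$ become zero for $i<k$, which is exactly essential triviality.

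I expect the main obstacle to be the bookkeeping that bridges the colimit/pro-triviality condition ``essentially trivial'' with finite generation of syzygies in the inductive backward direction, and dually the careful colimit argument in the forward direction. In particular one must work consistently with \emph{projective} rather than free modules, because the stabilisers are only of type $FP(R)$ rather than finite, repeatedly invoking that $R[G/H]$ inherits the finiteness type of $H$; and one must handle the $FP(R)$ case uniformly by exploiting finite-dimensionality of $X$ to bound the resolution length while demanding essential triviality in all degrees $i$.
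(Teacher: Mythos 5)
First, a point of comparison: the paper does not prove \cref{thm:brown} at all --- it is recalled, in Leary's simplified form, from Brown's finiteness criterion, with the proofs living in \cite{BrownCrit} (see also \cite{Lea}). So your proposal is really a reconstruction of Brown's original argument, and its overall architecture is the correct one and matches his: the augmented cellular chain complex is a bounded resolution of $R$ by $R$-acyclicity and finite-dimensionality; $C_n\cong\bigoplus_\sigma R[G/G_\sigma]$ with no orientation-module issues (the action is free off the vertices, so $C_n$ is free for $n\geq 1$, and vertex stabilisers fix their vertex); $R[G/H]=\mathrm{Ind}_H^G R$ inherits type $FP(R)$ (resp.\ $FP_k(R)$) from $H$ by inducing a resolution; and the two directions are handled by comparing a finite-type partial resolution of $R$ with the filtration stages $C_*(X(m))$, which are finitely generated in each degree.

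There is, however, one step that is false as literally written, and it is the crux of the whole theorem. You assert in the forward direction that ``the module of cycles that must be killed is finitely generated'' once the obstructing homology has died at a finite stage. Since $RG$ is not Noetherian, submodules of the finitely generated modules $C_i(X(m))$ --- in particular cycle and syzygy modules --- need not be finitely generated, and essential triviality does not repair this directly. The same issue shows the converse cannot be the ``diagram chase'' you describe: if the $\tilde H_i(X(m);R)$ were finitely generated over $RG$, then $\varinjlim_m \tilde H_i(X(m);R)=\tilde H_i(X;R)=0$ would already yield essential triviality with no $FP_k(R)$ hypothesis whatsoever, which is absurd. The standard repair, and the place where Brown's algebraic section does its real work, is to replace each truncated finite-stage complex $C_{\leq k}(X(m))$ by a quasi-isomorphic complex $Q^{(m)}$ of \emph{finitely generated projectives}, obtained by resolving each module $C_i(X(m))$ (possible precisely because the stabilisers, hence these modules, are of type $FP(R)$ or $FP_k(R)$) and totalising the resulting double complex. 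In the forward direction one then builds the resolution degree by degree, passing at each stage to a larger $m$ where essential triviality pushes cycles into boundaries, and covering the new syzygy through $Q^{(m)}$, with Schanuel-type juggling of finiteness conditions replacing your appeal to finite generation of cycles. In the converse, one lifts $Q^{(m)}\to P_{\leq k}$ over $\mathrm{id}_R$ (projectivity of $Q^{(m)}$ is what makes the lift exist), notes that the resulting chain homotopy between the inclusion $C_{\leq k}(X(m))\hookrightarrow C_*$ and the composite through $P_*$ lands in some finite stage $X(m')$ by finite generation, and concludes that $\tilde H_i(X(m);R)\to \tilde H_i(X(m');R)$ factors through the vanishing homology of the augmented $P_{\leq k}$ for $i<k$. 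You flagged this bookkeeping as the main obstacle, which is the right instinct, but the projective replacement of the finite stages is a genuinely missing idea in your sketch, not mere bookkeeping: without it, both of your directions fail at the first non-Noetherian syzygy.
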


\begin{theorem}\label{thm:typefpn}
	Let $\sigma, L, \CC$ be as above. Suppose that $\pi_1(L)/\pi_1(\sigma(n))$ is of type $FP_k(R)$ for all $n$.  Then $G_L(\sigma)$ is type $FP_k(R)$ if and only if $\tilde{H}_i(\sigma(n), R)$ vanishes for all but finitely pairs $(i, n)$ with $n\in \Z$ and $i < k$. 
	
	Similarly, suppose $\pi_1(L)/\pi_1(\sigma(n))$ is of type $FP(R)$ for all $n$. Then $G_L(\sigma)$ is type $FP(R)$ if and only if $\tilde{H}_i(\sigma(n), R)$ vanishes for all but finitely pairs $(i, n)$ with $n\in \Z$ and $i\in\N$.
\end{theorem}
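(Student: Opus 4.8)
The plan is to apply Brown's criterion (\Cref{thm:brown}) to the action of $G = G_L(\sigma)$ on the CAT(0) cube complex $X = X_L^\sigma$, and to use the Morse function $f_\sigma$ to translate the resulting homological condition into the asserted statement about the reduced homology of the links $\sigma(n)$.

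First I would check that the hypotheses of \Cref{thm:brown} hold. The complex $X_L^\sigma$ is CAT(0), hence contractible and in particular $R$-acyclic, and it is finite dimensional. The action of $G$ is free away from the vertices, so it remains to identify the vertex stabilizers. A vertex $v$ with $f_\sigma(v) = n$ has $\Lk(v, X_L^\sigma) = \S(\sigma(n))$, and the stabilizer of $v$ in $G$ is the deck group of the induced covering $\S(\sigma(n)) \to \S(L)$. Since $\sigma(n)$ is a \emph{normal} cover of $L$, \Cref{cor:subgroupscovers} gives $\pi_1(\S(\sigma(n))) = r_*^{-1}(\pi_1(\sigma(n)))$, a normal subgroup of $\pi_1(\S(L))$; because $r_*$ is surjective the quotient is $\pi_1(\S(L)) / r_*^{-1}(\pi_1(\sigma(n))) \cong \pi_1(L)/\pi_1(\sigma(n))$. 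Thus the isotropy groups are precisely the groups assumed to be of type $FP_k(R)$ (resp. $FP(R)$), as Brown's criterion requires. Finally I take the filtration $X(m) = f_\sigma^{-1}([-m,m])$; cocompactness of the level sets shows each $X(m)$ has finitely many $G$-orbits of cells and $X = \bigcup_m X(m)$, so \Cref{thm:brown} reduces the theorem to showing that the system $\tilde H_i(X(m); R)$ is essentially trivial for all $i < k$ if and only if $\tilde H_i(\sigma(n); R)$ vanishes for all but finitely many pairs $(i,n)$ with $i < k$.

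The bridge to the links is Bestvina--Brady Morse theory. Since the ascending and descending links of every height-$n$ vertex are copies of $\sigma(n)$, passing from $X(m)$ to $X(m+1)$ cones off the descending links of the height-$(m+1)$ vertices and the ascending links of the height-$(-(m+1))$ vertices, all of which are copies of $\sigma(\pm(m+1))$. Consequently $H_i(X(m+1), X(m); R) \cong \bigoplus \tilde H_{i-1}(\sigma(\pm(m+1)); R)$, where the sum runs over the relevant orbits of vertices. The forward implication is then immediate: if $\tilde H_j(\sigma(n); R) = 0$ for all $j < k$ and all $|n| > N$, these relative groups vanish in degrees $i \le k$ once $m \ge N$, so the long exact sequence of the pair makes $\tilde H_i(X(m)) \to \tilde H_i(X(m+1))$ an isomorphism for every $i < k$ and $m \ge N$. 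As $X$ is contractible, $\mathrm{colim}_m \tilde H_i(X(m)) = \tilde H_i(X) = 0$, which forces the stabilized value to be zero; hence the system is essentially trivial and $G$ is $FP_k(R)$. The $FP(R)$ statement is identical, using the all-degrees form of \Cref{thm:brown} and link acyclicity in every degree outside a finite set.

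The hard part, and the main obstacle, is the converse. The difficulty is that the connecting homomorphisms $\partial \colon H_i(X(m+1), X(m)) \to \tilde H_{i-1}(X(m))$ need not vanish, so nonzero link homology could a priori be repeatedly cancelled against homology already present at lower heights, and one must rule out such infinite cancellation. My plan is to argue contrapositively. If $\tilde H_j(\sigma(n); R) \neq 0$ for infinitely many pairs with $j < k$, then since there are only finitely many admissible degrees, some fixed $j < k$ satisfies $\tilde H_j(\sigma(n); R) \neq 0$ for infinitely many heights $n$; by the Morse identification the relative groups $H_{j+1}(X(m+1), X(m); R)$ are therefore nonzero for infinitely many $m$. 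I would then trace these classes through the long exact sequences of the pairs, using that each nontrivial link class is created at a definite, unboundedly large height and that the ascending and descending contributions enter at disjoint heights, to produce for arbitrarily large $m$ a nonzero class in some $\tilde H_i(X(m))$ with $i < k$ that cannot be uniformly killed, contradicting essential triviality. Carrying out this bookkeeping of where homology is created and destroyed along the filtration—so that infinitely many nonzero link contributions are shown to be incompatible with essential triviality—is the technical crux, and it is handled by adapting the corresponding argument of \cite{Lea}.
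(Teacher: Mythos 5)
Your setup and your proof of the ``if'' direction are correct and essentially the paper's: Brown's criterion (\cref{thm:brown}) applied to the contractible finite-dimensional complex $X_L^\sigma$, vertex stabilisers identified with the deck groups $\pi_1(L)/\pi_1(\sigma(n))$ (your use of normality and \cref{cor:subgroupscovers} here is exactly right), the sublevel-set filtration $X(m)$, and the Morse-theoretic identification $H_i(X(m+1),X(m);R)\cong\bigoplus\tilde H_{i-1}(\sigma(\pm(m+1));R)$ followed by the long exact sequence and the colimit argument. The paper compresses that last step into a citation of \cite[Corollary 2.6]{BeBr}, but your unpacking of it is accurate.

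The ``only if'' direction, however, is a genuine gap: you correctly diagnose the obstacle (nonvanishing relative groups say nothing about pro-triviality, since connecting maps could cancel link classes against lower homology), but the sentence ``handled by adapting the corresponding argument of \cite{Lea}'' is precisely the missing content, and the long-exact-sequence bookkeeping you propose in its place does not obviously close it --- a priori you cannot even certify that a nonzero class of $\tilde H_i(\sigma(l');R)$ remains nonzero after inclusion into a sublevel set, let alone that it persists through all intermediate stages. The paper's mechanism is a \emph{retraction}, not bookkeeping: for a vertex $v$ with $f_\sigma(v)=l'$ and $\tilde H_i(\sigma(l');R)\neq 0$, extend geodesics from $v$ downwards to realise the descending link $\sigma(l')$ as a subspace of $X(m')$ for every $m'<l'$, and compose the radial projection $X(l'-1)\to\Lk(v,X_L^\sigma)=\S(\sigma(l'))$ with the retraction $\S(\sigma(l'))\to\sigma(l')$ from the preliminaries. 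This composition retracts $X(m')$ onto the embedded copy of $\sigma(l')$, so the inclusion is \emph{split} injective on homology: the class survives in $\tilde H_i(X(m');R)$ for all $m'<l'$ and dies only in $X(l')$, where $v$ cones off the descending link. Since (after pigeonholing to fix a single $i<k$) such $l'$ occur above every height, for any fixed $m$ and every $m'\geq m$ the map $\tilde H_i(X(m);R)\to\tilde H_i(X(m');R)$ is nonzero, which is exactly the negation of essential triviality --- note that your formulation ``a nonzero class in some $\tilde H_i(X(m))$ that cannot be uniformly killed'' leaves these quantifiers loose, and it is the splitting that pins them down. Without the retraction idea, the converse remains unproven.
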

\begin{proof}
	We focus on the proof for $FP_k(R)$, the proof for $FP(R)$ is similar. 

	The group $G_L(\sigma)$ acts on $X_L^\sigma$ freely away from vertices. 
	For vertices at height $n$, the stabiliser is the deck group of the covering $\sigma(n)\to L$. 
	This is exactly $\pi_1(L)/\pi_1(\sigma(n))$ which is of type $FP_k(R)$. 
	
	Let $X(m) = f_\sigma^{-1}([-m-\frac{1}{2}, m + \frac{1}{2}])$. 
	We are now in the situation of \cref{thm:brown}, thus $G_L(\sigma)$ is of type $FP_k(R)$ if and only if for all $i<k$ the sequence $\tilde{H}_i(X(m); R)$ of reduced homology groups is essentially trivial.
	
	If $\tilde{H}_i(\sigma(n), R)$ vanishes for all but finitely pairs $(i, n)$ with $n\in \Z$ and $i < k$, then by \cite[Corollary 2.6]{BeBr} we can find an $l$ such that for all $m > l$ the inclusion $X(l) \to X(m)$ induces an isomorphism on all $H_i$ for $i< k$. 
	Since homology commutes with direct limits we see that $H_i(X(l)) = H_i(X_L^\sigma) = 0$ for all $i<k$. 
	Thus the system is essentially trivial. 
	
	Now conversely suppose that there are infinitely many $n$ such that $H_i(\sigma(n), R)\neq 0$ for some $i<k$. 
	Thus for each $l>0$ we can find $l' > l + 1$ and $v$ such that $H_i(\sigma(l'); R)$ or $H_i(\sigma(-l'); R)$ is non-trivial for some $i<k$. 
	We will assume that $\sigma(l')$ has the non-trivial homology group. 
	Let $v$ be a vertex at height $l'$. 
	There is a map from $X(l'-1)$ to $\Lk(v, X_L^\sigma)$. 
	We can further compose with the retraction $\Lk(v, X_L^\sigma)\to \sigma(l')$. 
	By extending geodesics from $v$ downwards, we can view $\sigma(l')$ as a subspace of $X(l'-1)$ and this composition will be a retraction. 
	However, the inclusion $\sigma(l')\to X(l')$ induces the trivial map on homology, thus we have an element in the kernel of the map $H_i(X(l'-1); R) \to H_i(X(l'); R)$. 
	Thus the system of homology groups is not essentially trivial and $G_L(\sigma)$ is not of type $FP_k(R)$. 
\end{proof}

We can also prove similar results about finite presentability of $G_L(\sigma)$. 
\begin{theorem}
	Suppose that $\pi_1(L)/\pi_1(\sigma(n))$ is finitely presented for all $n$. 
	Then $G_L(\sigma)$ is finitely presented if and only if $\pi_1(\sigma(n))$ vanishes for all but finitely many $n$. 
\end{theorem}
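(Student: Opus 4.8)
The plan is to run the homotopical analogue of the proof of \cref{thm:typefpn}, replacing reduced homology by $\pi_0$ and $\pi_1$ and replacing the homological Brown criterion \cref{thm:brown} by its topological counterpart for finite presentation (see \cite{BrownCrit}): if $G$ acts on a contractible complex $X = \bigcup_m X(m)$, cocompactly on each $X(m)$, with vertex stabilisers finitely presented and all higher cell stabilisers of type $F_1$, then $G$ is finitely presented if and only if the systems $\{\pi_0(X(m))\}$ and $\{\pi_1(X(m))\}$ are essentially trivial. I apply this to $X = X_L^\sigma$ with $X(m) = f_\sigma^{-1}([-m-\tfrac12,m+\tfrac12])$; the space is CAT(0), hence contractible, the vertex stabilisers are the deck groups $\pi_1(L)/\pi_1(\sigma(n))$, which are finitely presented by hypothesis, and the action is free off the vertices, so the hypotheses hold. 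Since $L$ is connected every cover $\sigma(n)$ is connected, so each $X(m)$ is connected, the $\pi_0$-system is essentially trivial, and $G_L(\sigma)$ is automatically finitely generated. The entire content therefore lies in the system $\{\pi_1(X(m))\}$.

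First I would record the Morse-theoretic description of the bonding maps, exactly as in the proof of \cref{thm:typefpn}: up to homotopy, $X(m+1)$ is obtained from $X(m)$ by coning off a copy of the descending link $\sigma(m+1)$ at each vertex of height $m+1$ and a copy of the ascending link $\sigma(-(m+1))$ at each vertex of height $-(m+1)$. As these links are connected, van Kampen's theorem shows coning them off only kills $\pi_1$; thus each bonding map $\pi_1(X(m)) \to \pi_1(X(m+1))$ is \emph{surjective}, with kernel the normal closure of the images of $\pi_1(\sigma(\pm(m+1)))$. Because $\pi_1$ commutes with the direct limit $X_L^\sigma = \varinjlim X(m)$ and $X_L^\sigma$ is simply connected, the colimit of this surjective system is trivial. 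For a surjective system this yields the clean equivalence that I expect to be the crux of the argument: $\{\pi_1(X(m))\}$ is essentially trivial if and only if $\pi_1(X(m)) = 1$ for all sufficiently large $m$. (Indeed, a trivial composite in a surjective system forces the target to be trivial, and once a term is trivial all later terms are.)

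It then remains to match $\pi_1(X(m))=1$ eventually with $\pi_1(\sigma(n))=1$ eventually, using the retraction already built in the proof of \cref{thm:typefpn}. For a vertex $v$ of height $n$, extending geodesics downward from $v$ realises $\sigma(n)$ as a subspace of $X(|n|-1)$ admitting a retraction onto it; on $\pi_1$ this is a split surjection, so $\pi_1(\sigma(n))$ injects into $\pi_1(X(|n|-1))$. For the forward implication, if $G_L(\sigma)$ is finitely presented then by the equivalence above $\pi_1(X(m)) = 1$ for large $m$, and the split injection forces $\pi_1(\sigma(\pm(m+1))) = 1$; hence $\pi_1(\sigma(n)) = 1$ for all large $|n|$. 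For the reverse implication, if $\pi_1(\sigma(n)) = 1$ for all $|n| > N$ then for $m \geq N$ every coned-off link is simply connected, so the bonding maps are isomorphisms; comparing with the trivial colimit gives $\pi_1(X(m)) = 1$ for $m \geq N$, so the $\pi_1$-system is essentially trivial and the criterion yields finite presentation.

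The step I expect to require the most care is the second paragraph: getting the Morse description of the bonding maps right, verifying that coning off the connected links $\sigma(\pm(m+1))$ makes each bonding map surjective, and thereby upgrading the merely qualitative fact that every class eventually dies (automatic, since $X_L^\sigma$ is simply connected) into the genuine dichotomy controlled by the single integer $N$. The remaining points are routine: the bookkeeping with basepoints in the $\pi_1$-systems is harmless because all spaces involved are connected, and one must simply cite the correct homotopical form of Brown's criterion in place of \cref{thm:brown}.
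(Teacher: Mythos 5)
Your proposal is correct, and all the ingredients you lean on are available: the Morse-theoretic description of $X(m+1)$ as $X(m)$ with copies of the connected links $\sigma(\pm(m+1))$ coned off is exactly \cite[Lemma 2.5/Corollary 2.6]{BeBr} applied to $f_\sigma$, the filtration form of Brown's homotopical criterion for finite presentation is in \cite{BrownCrit}, and the split injection $\pi_1(\sigma(n))\hookrightarrow\pi_1(X(|n|-1))$ via the downward-geodesic embedding and retraction is the device already used in the proof of \cref{thm:typefpn}. However, your route is genuinely different in structure from the paper's. You run everything through the direct system $\{\pi_1(X(m))\}$: the bonding maps are surjections with kernel the normal closure of the images of the link fundamental groups, the colimit is trivial since $X_L^\sigma$ is CAT(0), and your key observation --- that for a surjective system with trivial colimit, essential triviality is equivalent to eventual vanishing of the terms --- converts Brown's criterion into the clean dichotomy $\pi_1(X(m))=1$ for large $m$. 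The paper instead avoids the filtration criterion entirely and reduces both directions to exhibiting a single simply connected cocompact subcomplex $X(l)$: for the reverse direction, simply connected links beyond height $l$ make $X(l)\hookrightarrow X_L^\sigma$ a $\pi_1$-isomorphism, so $G_L(\sigma)$ acts cocompactly on the simply connected $X(l)$ with finitely presented vertex stabilisers, and only the cocompact special case of Brown's theorem is needed; for the forward direction, it argues by hand that finite presentability yields finitely many orbits of loops normally generating $\pi_1(f_\sigma^{-1}(\frac{1}{2}))$, which die in some $X(l)$, so that $\pi_1$-surjectivity of the level-set inclusion forces $X(l)$ to be simply connected, and then the same retraction gives $\pi_1$-surjections $X(l)\to\sigma(m)$ for $|m|>l$ (where you instead use injectivity of the splitting, with the same effect). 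Your version buys a tighter parallel with the homological argument of \cref{thm:typefpn} and isolates the surjectivity of the system as the reason essential triviality collapses to eventual vanishing; the paper's version buys independence from the stronger filtration form of Brown's criterion and from basepoint bookkeeping in pro-$\pi_1$ statements, at the cost of the more hands-on loop-killing argument in the level set.
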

\begin{proof}
	For one direction, suppose that $G_L(\sigma)$ is finitely presented. 
	Since it acts freely and cocompactly on $f_\sigma^{-1}(\frac{1}{2})$, there are finitely many orbits of loops which normally generate $\pi_1(f_\sigma^{-1}(\frac{1}{2}))$. 
	Since the limit of $X(m)$ is simply connected, we see that there is an $l$ such that each of these loops is trivial in $X(l)$. 
	Thus the inclusion $f_\sigma^{-1}(\frac{1}{2})\to X(l)$ is trivial on fundamental groups. 
	By \cite[Corollary 2.6]{BeBr}, we have that $f_\sigma^{-1}(\frac{1}{2})\to X(l)$ is also a surjection. 
	Thus, $X(l)$ is simply connected. 
	Now using the retraction from the proof of \cref{thm:typefpn} we obtain a $\pi_1$-surjective map $X(l)\to \sigma(m)$ for all $m$ such that $|m|>l$.
	Thus, we see that for all $m$ such that $|m|>l$, we must have $\sigma(m)$ is simply connected. 
	
	For the other direction, suppose that there is an $l$ such that if $|m|>l$ we have that $\sigma(m)$ is simply connected. 
	Then, by \cite[Corollary 2.6]{BeBr}, $X(l)$ is simply connected and has a cocompact action by $G_L(\sigma)$. 
	We now have a cocompact action of $G_L(\sigma)$ on a simply connected CW complex where stabilisers of cells are finitely presented. 
	Thus $G_L(\sigma)$ is finitely presented. 
\end{proof}

\section{Presentations for $G_L(\sigma)$}

This section closely follows Section 14 of \cite{Lea}.

We begin by describing presentations of the groups $G_L(\sigma)$ obtained in the previous section. 
\begin{notation}
	Let $L$ be a simplicial complex. 
	Let $E$ be the set of edges of $L$. 
	For a loop $c = (e_1, \dots, e_l)$ in $L$. Let $c^{[k]}$ denote the word $e_1^ke_2^k\dots e_l^k$ in $F(E)$. 
\end{notation}

\begin{theorem}\label{thm:presentationforgroups}
	Let $\gamma_{i, n}$ be a collection of loops that normally generate $\pi_1(\sigma(n))$. 
	Let $E$ be the edges of $L$. 
	Suppose that $\sigma(0) = L$. 
	Then $G_L(\sigma)$ has the following presentation:
	$$\langle E\mid efg, gfe \text{ for each triangle $e, f, g$ in $L$ }, \gamma_{i, n}^{[n]} \text{ for $n\in \Z$ and all } i \rangle. $$
\end{theorem}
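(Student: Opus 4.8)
The plan is to read off the presentation from the cube-complex structure of $X_L/BB_L$, computing $G_L(\sigma)=\pi_1(Y_L(\sigma))$ by the usual maximal-tree method applied to the $2$-skeleton and then tracking the two operations that build $Y_L(\sigma)$: deleting the vertices in $V(\sigma)$ and coning off the loops $\gamma_{i,n}$. The cells I need are: one vertex $*_n$ at each height $n$; for each vertex $a$ of $L$ an edge $a_n\colon *_n\to *_{n+1}$; and for each edge $\{a,b\}$ of $L$ a square with boundary $a_n\,b_{n+1}\,\overline{a_{n+1}}\,\overline{b_n}$. Because $\sigma(0)=L$ the vertex $*_0$ is never deleted, so I take it as the basepoint; note that height $0$ then carries no relation, matching the fact that $c^{[0]}$ is empty.

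First, the generators. Choosing the spanning tree $\{(a_*)_n:n\in\Z\}$ for a fixed vertex $a_*$, the non-tree edges give generators $g_{a,n}$. The squares containing $a_*$ let me drop the height index: for $a$ adjacent to $a_*$ the element $g_{a,n}$ is independent of $n$, and the remaining squares together with connectivity of $L$ propagate this to all vertices, leaving exactly one generator $e$ for each edge of $L$. This is where connectivity of $L$ enters, and it is what makes $E$ a generating set.

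Next, the relations, which come in two families. The squares attached along the $2$-simplices of $L$ (the commuting triples, whose $3$-cubes glue the squares together) give the triangle relators $efg$ and $gfe$; I would check these by displaying the bounding squares, the two relators reflecting the two hemispheres of the spherical double $\S(\tau)$ of a triangle $\tau$. A short computation then shows that $efg$ and $gfe$ force $e,f,g$ to commute pairwise with $efg=1$, so that $c^{[n]}=1$ holds automatically whenever $c$ is null-homotopic in $L$. The disks give the second family: deleting $*_n$ frees the loops in its link, and coning off $\gamma_{i,n}$ re-imposes exactly the classes lying in $\pi_1(\sigma(n))$; expressing such a link loop in the edge generators based at $*_0$ produces the relator $\gamma_{i,n}^{[n]}$.

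The main obstacle is this last identification, i.e.\ the exponent. The loop $\gamma_{i,n}$ sits at height $n$, and to view it in $\pi_1(Y_L(\sigma),*_0)$ I must slide it down through the $n$ levels between height $n$ and the basepoint. The crux is a holonomy computation: the ascending-link edge from $a^+$ to $b^+$ at height $n$, based at $*_0$ by running geodesics down to height $0$, equals the $n$-th power $e_{ab}^{\,n}$ of the height-$0$ edge generator, since each of the $n$ descents contributes one factor $e_{ab}$ (and the degenerate case $n=0$ gives the trivial loop, as $*_0$ is present). Concatenating these over the edges $e_1,\dots,e_l$ of the loop $c$ underlying $\gamma_{i,n}$, with the connecting paths telescoping, yields $c^{[n]}=e_1^n\cdots e_l^n$ rather than $(e_1\cdots e_l)^n$. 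Granting the holonomy claim, all listed relators hold; completeness---that they normally generate every relation---I would establish as in Section~14 of \cite{Lea}, using that the only $2$-cells of $X_L/BB_L$ are the squares and that the deletions and disks alter $\pi_1$ only through the link loops at each height in the way just described.
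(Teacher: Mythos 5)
Your strategy is, in substance, the paper's: the paper also obtains the presentation by combining a triangle-relator presentation of a ``base'' group with the identification of each glued disk with the relator $\gamma_{i,n}^{[n]}$, and your holonomy computation of the exponent (each of the $n$ descents contributing one factor, giving $e_1^n\cdots e_l^n$ rather than $(e_1\cdots e_l)^n$) is exactly the content of \cite[Lemma 14.3]{Lea}, which the paper cites; likewise both you and the paper defer completeness of the relator set to Section 14 of \cite{Lea}. The difference is in how the base presentation $\langle E\mid efg,\ gfe\rangle$ is reached, and there your argument has a step that fails as written.

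The maximal-tree method applied to the cells you list (all vertices $*_n$, all edges $a_n$, all squares) computes $\pi_1(X_L/BB_L)=BB_L$, since $\pi_1$ is carried by the $2$-skeleton. But $BB_L$ is a \emph{proper} quotient of $\langle E\mid efg,\ gfe\rangle$ whenever $\pi_1(L)\neq 1$: by the Dicks--Leary presentation (used in the paper's proof of \cref{thm:subpresnotfp2}) $BB_L$ additionally satisfies $c^{[n]}=1$ for every edge loop $c$ of $L$ and every $n$, whereas in $G_L(\sigma)$, by \cref{lem:trivial}, $\gamma^{[n]}=1$ only when $\gamma$ lifts to $\sigma(n)$. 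You cannot repair this by ``tracking'' the vertex deletions afterwards: a punctured square is not a subcomplex, so removing $V(\sigma)$ is not the removal of cells from the CW structure; indeed every square of $X_L/BB_L$ spans three consecutive heights, hence loses at least one corner once the nonzero-height vertices are removed, so \emph{no} square relator survives at face value, and the triangle relators actually arise from the $3$-cubes (the level-set polygons, i.e.\ the hemispheres of $\S(\tau)$ you mention in passing). This is precisely what the paper's route through the level set avoids: it sets $Y_0=f^{-1}(0)$, uses \cite[Corollary 10.4]{Lea} for $\pi_1$-surjectivity of $Y_0\hookrightarrow Y_L(\sigma)$ and \cite[Theorem 14.1]{Lea} for $\pi_1(Y_0)=\langle E\mid efg,\ gfe\rangle$, identifying this with $\pi_1$ of the vertex-deleted complex $Z$ before any disks are considered. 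Relatedly, your propagation claim is incorrect: $g_{a,n}$ is height-independent only when $a$ commutes with $a_*$ (in $BB_L$ one has $g_{a,n}=a_*^{\,n}\,a\,a_*^{-(n+1)}$, which genuinely depends on $n$ otherwise), and if all vertex generators were height-independent you would end with one generator per vertex of $L$, not one per edge; the height-independent elements are the edge differences $g_{a,n}g_{b,n}^{-1}$. None of this dooms the strategy---the fix is to build the punctures into the homotopy model from the start, Morse-theoretically, as in \cite{Lea}---but in the order you present it, the computation produces the wrong group (with too many relations) before the disks are even glued.
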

\begin{proof}
	Recall that we have a Morse function $f\colon X_L/BB_L\to \R$. 
	Let $Y_0 = f^{-1}(0)$. 
	Since $\sigma(0) = L$ we have that $Y_0\subset Y_L(\sigma)$
	By \cite[Corollary 10.4]{Lea}, we have that the inclusion $Y_0\to Y_L(\sigma)$ is a surjection on $\pi_1$. 
	From \cite[Theorem 14.1]{Lea}, we have a presentation for $\pi_1(Y_0)$ which is exactly, 
	$$\langle E\mid efg, gfe \text{ for each triangle $e, f, g$ in $L$ }\rangle. $$
	Let $Z = X_L\smallsetminus\{v\in X_L^{(0)}\mid f(v)\neq 0\}$. 
	Then $\pi_1(Z) = \pi_1(Y_0)$. 
	
	Thus to obtain a presentation of $G_L(\sigma)$ we add in the relations coming from the disks in $Y_L(\sigma)$. 
	By \cite[Lemma 14.3]{Lea}, we see that if $D$ is the disk glued to the word $\gamma_{i, n}$, then this corresponds exactly to the relation $\gamma_{i, n}^{[n]}$. 
	Thus we arrive at the desired presentation. 
\end{proof}

Later we will show that there are uncountably many such groups. 
For this purpose it will be useful to know the following: 

\begin{lemma}\label{lem:trivial}
	Let $\gamma$ be an edge loop in $L$. 
	Then $\gamma^{[n]}$ is trivial in $G_L(\sigma)$ if and only if $\gamma$ lifts to a loop in $\sigma(n)$.  
\end{lemma}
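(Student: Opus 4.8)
```latex
\textbf{Proof proposal for \Cref{lem:trivial}.}
The plan is to interpret the condition ``$\gamma^{[n]}$ is trivial in $G_L(\sigma)$'' geometrically in terms of the level set $f_\sigma^{-1}(\tfrac12)$ on which $G_L(\sigma)$ acts, or more directly in terms of the disk-attaching data defining $Y_L(\sigma)$. The key observation is that $\gamma^{[n]}$ is precisely the word associated (via the $c \mapsto c^{[n]}$ operation of the Definition preceding \Cref{thm:presentationforgroups}) to the edge loop $\gamma$ read off at height $n$. By \Cref{thm:presentationforgroups}, the relators of $G_L(\sigma)$ are the triangle relators $efg, gfe$ together with the words $\gamma_{i,m}^{[m]}$, where the $\gamma_{i,m}$ normally generate $\pi_1(\sigma(m))$. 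So the question reduces to: when does $\gamma^{[n]}$ lie in the normal closure of these relators?

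First I would isolate the role of the triangle relators. These are exactly the relators of the presentation of $\pi_1(Y_0)$, and one should check that modulo the triangle relators the assignment $\gamma \mapsto \gamma^{[n]}$ respects the equivalence of edge loops that are homotopic in $L$ via elementary moves across triangles; that is, if two edge loops represent the same element of $\pi_1(L)$, their $[n]$-powers agree modulo triangle relators. This lets me pass from the specific edge-path $\gamma$ to its class $[\gamma]\in\pi_1(L)$, so that $\gamma^{[n]}$ triviality depends only on $[\gamma]$ and $n$. Next I would analyze the contribution of the remaining relators $\gamma_{i,m}^{[m]}$: using \Cref{lem:trivial}-type bookkeeping on the $[\,\cdot\,]$ operation, the only relators that can interact with a height-$n$ word are those at the same height $m=n$ (conjugates of relators at other heights produce words supported at different heights and cannot cancel a pure height-$n$ word). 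This reduces the problem to the single fiber at height $n$.

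Having localized to height $n$, I would invoke the geometric description from the Lemma computing ascending/descending links: the piece of $X_L^\sigma$ near height $n$ is modeled on the cover $\S(\sigma(n))$ of $\S(L)$, whose fundamental group is $r_*^{-1}(\pi_1(\sigma(n)))$ by \Cref{cor:subgroupscovers}. Triviality of $\gamma^{[n]}$ then translates, via the retraction $r\colon\S(L)\to L$, into the statement that the loop $\gamma$ lifts to a \emph{loop} (rather than a nonclosed path) in the cover $\sigma(n)\to L$, i.e.\ that $[\gamma]\in\pi_1(\sigma(n))\le\pi_1(L)$. Concretely: $\gamma^{[n]}$ is trivial iff $[\gamma]$ lies in the normal closure of $\{\gamma_{i,n}\}$, which by construction is exactly $\pi_1(\sigma(n))$, and $[\gamma]\in\pi_1(\sigma(n))$ iff $\gamma$ lifts to a loop in $\sigma(n)$. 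Both implications follow once the reduction to height $n$ and the identification of the normal closure with $\pi_1(\sigma(n))$ are in place.

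The main obstacle will be the height-separation step in the second paragraph: rigorously showing that relators $\gamma_{i,m}^{[m]}$ at heights $m\neq n$ cannot combine (after conjugation) to kill a word supported purely at height $n$. This requires a careful ``height'' or ``exponent-sum at level $n$'' invariant on $F(E)$ that is preserved by the triangle relators and by conjugation, and that detects the height-$n$ content of a word; the $c\mapsto c^{[k]}$ operation is precisely engineered so that such a grading exists, but making the argument airtight likely needs the explicit combinatorics of \cite[Section 14]{Lea}. The forward direction (if $\gamma$ lifts to a loop then $\gamma^{[n]}$ is trivial) should be the easier half, following almost immediately from the fact that the $\gamma_{i,n}$ normally generate $\pi_1(\sigma(n))$ together with the triangle-relator reduction.
```
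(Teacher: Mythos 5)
Your forward direction is fine and coincides with the paper's: if $\gamma$ lifts to a loop in $\sigma(n)$, then $[\gamma]\in\pi_1(\sigma(n))$ is a product of conjugates of the $\gamma_{i,n}$, so the triangle relations together with the height-$n$ disks kill $\gamma^{[n]}$. The hard direction, however, has a genuine gap, and it is exactly the step you flag yourself: the ``height separation'' claim that conjugates of relators $\gamma_{i,m}^{[m]}$ with $m\neq n$ cannot participate in a van Kampen diagram for $\gamma^{[n]}$. No grading of the kind you postulate exists on $F(E)$: all relators, at every height $m$, are words in the same alphabet $E$; the exponent blocks $e^m$ that nominally record the height are dismantled by the triangle relators, which freely mix letters from different blocks; and conjugation is unrestricted. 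Height is a geometric attribute of where a disk sits relative to the Morse function $f_\sigma$, not a combinatorial invariant of words in $F(E)$, so a null-homotopy of $\gamma^{[n]}$ can a priori use cells at all heights, and excluding this is precisely the content of the lemma rather than a bookkeeping preliminary. Your appeal to ``\Cref{lem:trivial}-type bookkeeping'' is circular, and the deferral to \cite[Section~14]{Lea} does not help: that section supplies the disk-to-relator dictionary used in \Cref{thm:presentationforgroups}, not a height grading, and Leary's own analogue of this statement is proved geometrically, not by word combinatorics.

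The missing idea is the covering-space argument the paper uses, which bypasses the presentation entirely. The word $\gamma^{[n]}$ is realised by a loop in $Y_L(\sigma)$ that is freely homotopic to the copy $\gamma_n$ of $\gamma$ lying in the ascending (or descending, according to the sign of $n$) link of the height-$n$ vertex, a subcomplex isomorphic to $L$. If $\gamma^{[n]}$ is trivial in $G_L(\sigma)=\pi_1(Y_L(\sigma))$, it lifts to a loop in the universal cover $\widetilde{Y_L(\sigma)}$; lifting the homotopy, $\gamma_n$ also lifts to a loop $\lambda_n$, which necessarily lies in the ascending or descending link of a vertex at height $n$. By the earlier lemma computing links in $X_L^\sigma$, that subcomplex is exactly $\sigma(n)$, so $\lambda_n$ is a lift of $\gamma$ to a loop in $\sigma(n)$. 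This one application of the link lemma replaces all of your proposed combinatorics; your correct identification of the normal closure of $\{\gamma_{i,n}\}$ in $\pi_1(L)$ with $\pi_1(\sigma(n))$ (using normality of the cover) then closes the argument, but only after the localization to height $n$ has been established geometrically.
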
 
\begin{proof}
	The loop $\gamma^{[n]}$ in $Y_L(\sigma)$ is homotopic to the loop $\gamma$ in the link of the vertex $v$ of $X_L$ at height $n$.
	We will label this $\gamma_n$.
	Note that $\gamma_n$ belongs to the ascending or descending link of $v$ (depending on the sign of $n$) and this subspace is exactly $L$. 
	
	If $\gamma$ lifts to a loop in $\sigma(n)$, then $\gamma$ defines an element of $\pi_1(\sigma(n))$ and is homotopic to a product of the loops $\gamma_{i, n}$. 
	Thus together with the triangle relations we see that $\gamma^{[n]}$ is trivial in $\pi_1(Y_L(\sigma))$. 
	
	Now suppose that $\gamma^{[n]}$ is trivial in $\pi_1(Y_L(\sigma))$, then the loop $\gamma^{[n]}$ lifts to a loop in $\widetilde{Y_L(\sigma)}$. 
	We can now lift the homotopy and see that $\gamma_n$ also lifts to a loop in $\widetilde{Y_L(\sigma)}$ which we will call $\lambda_n$. 
	This loop must be in the ascending or descending link of some vertex $w$ of $\widetilde{Y_L(\sigma)}$. 
	However, this link is exactly $\sigma(n)$, thus $\lambda_n$ is a loop in $\sigma(n)$ which is a lift of $\gamma$. 
\end{proof}

\section{Maps between $G_L(\sigma)$} 

Recall that we partially order the set $\CZ$ by $\sigma\preceq\sigma'$ if $\sigma(n)$ is a cover of $\sigma'(n)$. 
Let $\gamma_{i, n}$ be the set of loops that generate $\pi_1(\sigma(n))$ and $\gamma_{i, n}'$ be the set of loops that normally generate $\pi_1(\sigma'(n))$. 
In the case that $\sigma(n)$ is a cover of $\sigma'(n)$ we can assume that $\{\gamma_{i, n}\}\subset\{\gamma_{i, n}'\}$. 
Thus, one can see from the presentations in \Cref{thm:presentationforgroups} that there are surjective maps $G_L(\sigma)\to G_L(\sigma')$ whenever, $\sigma\preceq\sigma'$. 
This map can also be realised at the level of cube complexes. 
\begin{theorem}\label{thm:covers}
	Let $\sigma, \sigma'\in \CZ$. 
	Suppose that $\sigma(n)\preceq\sigma'(n)$. 
	Then there is a branched cover of cube complexes $X_L^\sigma\to X_L^{\sigma'}$ which preserves level sets. 
\end{theorem}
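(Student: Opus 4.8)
The plan is to construct the map $\Phi\colon X_L^\sigma\to X_L^{\sigma'}$ by building it first over the complement of the branch locus, where both branched covers are honest coverings, and then extending over the deleted vertices by metric completion, exactly as in the construction of $X_L^\sigma$ itself. Both $b\colon X_L^\sigma\to X_L/BB_L$ and $b'\colon X_L^{\sigma'}\to X_L/BB_L$ live over the common base $X_L/BB_L$; since the height functions factor as $f_\sigma=f\circ b$ and $f_{\sigma'}=f\circ b'$, it suffices to produce $\Phi$ with $b'\circ\Phi=b$, as this forces $f_{\sigma'}\circ\Phi=f_\sigma$. The hypothesis $\sigma\preceq\sigma'$ gives, for every $n$, a covering $\sigma(n)\to\sigma'(n)$, hence $\pi_1(\S(\sigma(n)))\leq\pi_1(\S(\sigma'(n)))$ and $V(\sigma')\subseteq V(\sigma)$; we may and do choose the normally generating loops so that $\{\gamma_{i,n}\}\subseteq\{\gamma'_{i,n}\}$.

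First I would produce the covering map away from the branch vertices. Set $B_\sigma=(X_L/BB_L)\smallsetminus V(\sigma)$. Since $V(\sigma')\subseteq V(\sigma)$, the branched cover $b'$ is unbranched over $B_\sigma$, so $b'^{-1}(B_\sigma)\to B_\sigma$ is an honest regular covering with deck group $G_L(\sigma')$ and monodromy $\rho'\colon\pi_1(B_\sigma)\to G_L(\sigma')$, while $Z_L(\sigma)\to B_\sigma$ is the regular covering with deck group $G_L(\sigma)$ corresponding to $N_\sigma=\langle\langle\gamma_{i,n}\rangle\rangle$. I claim $\rho'$ kills each $\gamma_{i,n}$: the loop $\gamma_{i,n}$ sits in the link $\S(L)$ of the height-$n$ vertex $v$ and represents an element of $\pi_1(\S(\sigma(n)))\leq\pi_1(\S(\sigma'(n)))$; since the link of the vertex of $X_L^{\sigma'}$ over $v$ is exactly $\S(\sigma'(n))$, the cover of $\S(L)$ for this subgroup by \Cref{cor:subgroupscovers}, the loop $\gamma_{i,n}$ lifts to a loop in that link, so its monodromy is trivial. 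Hence $N_\sigma\subseteq\ker\rho'$, and the covering corresponding to the smaller subgroup dominates, giving a covering map $\Phi_0\colon Z_L(\sigma)\to b'^{-1}(B_\sigma)$ over $B_\sigma$ that is equivariant for the surjection $G_L(\sigma)\to G_L(\sigma')$ and commutes with the projections to $X_L/BB_L$.

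Next I would complete. Lifting the $\mathrm{CAT}(0)$ metric of $X_L/BB_L$, the map $\Phi_0$ commutes with the projections and is a local isometry over $B_\sigma$, hence uniformly continuous on punctured neighborhoods of the deleted vertices, and so extends to the metric completions as $\Phi\colon X_L^\sigma\to X_L^{\sigma'}$ with $b'\circ\Phi=b$. At a vertex of $X_L^\sigma$ over the height-$n$ vertex $v$, the link is $\S(\sigma(n))$, and it is sent to the vertex of $X_L^{\sigma'}$ over $v$, whose link is $\S(\sigma'(n))$; the induced map on links covers the identity of $\S(L)$ and is therefore the covering $\S(\sigma(n))\to\S(\sigma'(n))$ of \Cref{prop:covers}. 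Thus $\Phi$ is a branched covering, branched precisely over the vertices where this link covering is nontrivial, and $f_{\sigma'}\circ\Phi=f\circ b'\circ\Phi=f\circ b=f_\sigma$, so $\Phi$ preserves level sets.

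The main obstacle is the extension across the branch locus, and in particular over the vertices of height $n$ with $\sigma'(n)=L$ but $\sigma(n)\neq L$: here a genuine branch vertex of $X_L^\sigma$ is sent to a regular vertex of $X_L^{\sigma'}$, so $\Phi_0$ is only defined on a punctured neighborhood and one must check that the completed map is well defined and continuous with the correct local model. I would handle this exactly as in the construction theorem: the completion cones off each link, and the link-level covering $\S(\sigma(n))\to\S(\sigma'(n))$ cones to the required branched local map, with continuity of $\Phi_0$ guaranteeing independence of the completing Cauchy sequences. Verifying finally that $\Phi$ is cubical then reduces to the link description, which is immediate.
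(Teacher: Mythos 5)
Your proof is correct, but it is organised differently from the paper's. The paper never compares monodromies over the punctured base: it chooses the normal generating sets nested, $\{\gamma_{i,n}\}\subseteq\{\gamma'_{i,n}\}$, so that $Y_L(\sigma)$ includes into $Y_L(\sigma')$ by adding the extra disks $\mathcal{D}'$; it then passes to the universal cover $\widetilde{Y_L(\sigma')}$, deletes the lifts of $\mathcal{D}'$ to obtain an intermediate space $Y_1$, identifies $Y_1$ as the cover of $Y_L(\sigma)$ corresponding to $\ker\bigl(G_L(\sigma)\to G_L(\sigma')\bigr)$ (so that $\widetilde{Y_L(\sigma)}$ covers $Y_1$), and finally removes disks and completes. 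You instead work downstairs over $B_\sigma=(X_L/BB_L)\smallsetminus V(\sigma)$, where both branched covers restrict to honest regular coverings, and you obtain the intermediate covering map from the subgroup inclusion $N_\sigma=\langle\langle\gamma_{i,n}\rangle\rangle\subseteq\ker\rho'$, which you verify by lifting each $\gamma_{i,n}$ into the link $\S(\sigma'(n))$ using the link computation and \cref{cor:subgroupscovers}. Your route has two small advantages: it is independent of any choice of nested generating sets (you only need $\gamma_{i,n}\in\pi_1(\S(\sigma'(n)))$, which holds automatically), and the identity $b'\circ\Phi=b$ makes the level-set preservation an explicit one-line consequence, something the paper leaves implicit. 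The paper's route, in exchange, exhibits the map directly as the quotient by the kernel of $G_L(\sigma)\to G_L(\sigma')$ via the concrete cover $Y_1$, which is the form used later (e.g., for the covering of Cayley graphs). Two points you gloss are standard but worth a sentence each if written up: connectivity of $b'^{-1}(B_\sigma)$ (equivalently surjectivity of $\pi_1(B_\sigma)\to\pi_1(B_{\sigma'})$, which holds because loops and homotopies can be pushed off the removed vertices through their connected links), and the fact that the completion of $b'^{-1}(B_\sigma)$ maps $1$-Lipschitz into the complete space $X_L^{\sigma'}$, so your extension lands where you claim; both are of the same nature as steps the paper itself elides.
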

\begin{proof}
	To see this recall $Y_L(\sigma)$ is obtained from $X_L$ by removing all vertices and gluing disks in the link at level $n$ to generators for $\pi_1(\S(\sigma(n)))$. 
	Thus take as our generating set for $\pi_1(\S(\sigma(n)))$ a generating set for $\pi_1(\S(\sigma'(n)))$ along with extra generators. 
	This way we obtain an inclusion $Y_L(\sigma)\to Y_L(\sigma')$. 
	
	Let $\mathcal{D}$ be the set of disks added to obtain $Y_L(\sigma)$. 
	Let $\mathcal{D}'$ be the set of disks added to $Y_L(\sigma)$ to obtain $Y_L(\sigma')$. 
	
	Let $\widetilde{Y_L(\sigma')}$ be the universal cover of $Y_L(\sigma')$. 
	Let $Y_1$ be the space obtained from $\widetilde{Y_L(\sigma')}$ by removing the lifts of disks in $\mathcal{D}'$. 
	Then $Y_1$ is the cover of $Y_L(\sigma)$ corresponding to the kernel of the surjection $G_L(\sigma)\to G_L(\sigma')$. 
	Thus, the universal cover of $Y_1$ is the universal cover of $Y_L(\sigma)$. 
	When we remove the disks and complete we get a branched cover of cube complexes ${X_L^\sigma}\to {X_L^{\sigma'}}$. 
\end{proof}

\begin{corollary}
	Let $\sigma, \sigma'\in\CZ$ with $\sigma\prec\sigma'$. 
	Suppose that $\sigma(0) = \sigma'(0) = L$. 
	Then the Cayley graph of $G_L(\sigma)$ is a cover of the Cayley graph for $G_L(\sigma')$, where both groups have as generating sets the edges of $L$. 
\end{corollary}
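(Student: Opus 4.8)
The plan is to build the covering from the presentations of \Cref{thm:presentationforgroups} rather than from the cube complexes, since with a common generating set $E$ the relevant map between the groups is a genuine quotient of groups with the same generators, and a quotient map that is the identity on generators is exactly what is needed to produce a covering of Cayley graphs.

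First I would record the surjection at the level of presentations. Because $\sigma\prec\sigma'$ means that each $\sigma(n)$ is a cover of $\sigma'(n)$, we have $\pi_1(\sigma(n))\leq\pi_1(\sigma'(n))$, so (as already noted at the start of this section) the normal generating sets may be chosen with $\{\gamma_{i,n}\}\subseteq\{\gamma_{i,n}'\}$. Since $\sigma(0)=\sigma'(0)=L$, \Cref{thm:presentationforgroups} applies to both groups and gives presentations on the common generating set $E$ whose relators are the triangle relators together with the words $\gamma_{i,n}^{[n]}$ (respectively $\gamma_{i,n}'^{[n]}$). The relator set of $G_L(\sigma)$ is therefore contained in that of $G_L(\sigma')$, so the identity map on $E$ induces a surjective homomorphism $\phi\colon G_L(\sigma)\to G_L(\sigma')$ carrying each generator to the generator with the same label. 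Set $K=\ker\phi$.

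Next I would promote $\phi$ to a covering of Cayley graphs in the standard way. Define $\Phi\colon\operatorname{Cay}(G_L(\sigma),E)\to\operatorname{Cay}(G_L(\sigma'),E)$ by $\Phi(g)=\phi(g)$ on vertices and by sending the $e$--labelled edge from $g$ to $ge$ to the $e$--labelled edge from $\phi(g)$ to $\phi(g)e=\phi(ge)$. This is a surjective, label- and orientation-preserving graph morphism whose vertex fibres are the cosets of $K$. To see that it is a covering it suffices to check that $\Phi$ restricts to a bijection on the star of each vertex: at every vertex of either Cayley graph there is exactly one outgoing and one incoming edge carrying each label $e\in E$, and $\Phi$ matches these up because it preserves labels and orientations. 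Equivalently, $K$ acts freely by left multiplication on $\operatorname{Cay}(G_L(\sigma),E)$ and the quotient graph is $\operatorname{Cay}(G_L(\sigma'),E)$, exhibiting a regular covering with deck group $K$.

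The only point requiring care is the formal verification that $\Phi$ is a covering rather than merely a surjection of graphs; here it is important to retain the edge labels and orientations of the directed Cayley graphs, so that distinct generators (and their inverses) are never identified near a vertex and no edge gets inverted. Everything else is bookkeeping: the existence of $\phi$ as the identity on $E$ is immediate from \Cref{thm:presentationforgroups} once the hypothesis $\sigma(0)=\sigma'(0)=L$ guarantees that the presentation theorem applies to both groups on the same generating set. I therefore expect no substantial obstacle beyond cleanly stating the local-bijection check.
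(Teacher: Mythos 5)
Your proof is correct, but it takes a genuinely different route from the paper's. The paper argues geometrically: since $\sigma(0)=L$, the group $G_L(\sigma)$ acts freely and vertex-transitively on the level set $f_\sigma^{-1}(0)\subset X_L^\sigma$, so the $1$-skeleton of that level set \emph{is} the Cayley graph; the level-set-preserving branched cover $X_L^\sigma\to X_L^{\sigma'}$ of \cref{thm:covers} then restricts to the desired covering of Cayley graphs (no branching interferes at height $0$ precisely because $\sigma(0)=\sigma'(0)=L$). You instead stay entirely on the algebraic side: the nested choice of normal generating sets $\{\gamma_{i,n}\}\subseteq\{\gamma_{i,n}'\}$ (the same convention the paper fixes at the start of this section) gives containment of relator sets in the presentations of \cref{thm:presentationforgroups}, hence a surjection $\phi$ that is the identity on $E$, and then the standard fact that such a quotient induces a regular covering of labelled directed Cayley graphs with deck group $\ker\phi$. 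Your version is more elementary and self-contained --- it bypasses the cube-complex machinery of \cref{thm:covers} entirely and makes explicit that the covering is regular with deck group $\ker\phi$ --- whereas the paper's version realizes the covering concretely inside the cube complexes, which is the form used later (e.g.\ in \cref{thm:uncountable}, where loops in $\Gamma_F$ are lifted along the covering from \cref{thm:covers} and played off against the Morse-theoretic length bound of \cref{lem:minlength}). Two small points of hygiene, which you essentially flag: the local star bijection requires the Cayley graph convention with edge set $G\times E$ (a labelled directed multigraph), since with the simple-graph convention a covering could degenerate if two labels had equal images or an image of order two; here this cannot in fact happen, because composing with the further quotient $G_L(\sigma')\twoheadrightarrow BB_L\leq A_L$ shows distinct edge generators remain distinct elements of infinite order. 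Note also that the hypothesis $\sigma(0)=\sigma'(0)=L$ enters your argument only through the applicability of \cref{thm:presentationforgroups}, while in the paper it is what identifies the level sets with Cayley graphs in the first place.
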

\begin{proof}
	In the case that $\sigma(0) = L$ we have that $G_L(\sigma)$ acts freely on $f_{\sigma}^{-1}(0)$. Moreover, it acts transitively on vertices. Thus, the 1-skeleton of $f_{\sigma}^{-1}(0)$ is the Cayley graph for $G_L(\sigma)$. 
	Similar statements hold for $G_L(\sigma')$. 
	
	Now \cref{thm:covers}, we have a covering map $X_L^\sigma\to X_L(\sigma')$ which preserves level sets. Thus we get a covering of Cayley graphs. 
\end{proof}

\begin{theorem}\label{thm:subpresnotfp2}
	There exists a presentation $G = \langle X\mid R\rangle$ of a group of type $FP_2(\Z)$ such that for any finite subset $T\subset R$, we can find $S$ such that $T\subset S\subset R$ and $\langle X\mid S\rangle$ is not of type $FP_2(R)$ for any ring $R$.
\end{theorem}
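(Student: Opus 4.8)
The plan is to realize the desired presentation as $G = G_L(\sigma_0)$ for a carefully chosen finite flag complex $L$ and base function $\sigma_0 \in \CZ$, and to exploit the dictionary between deleting relators and enlarging covers. Recall from \Cref{thm:presentationforgroups} that, once normal generating sets are fixed, the relators of $G_L(\sigma)$ are the triangle relators of $L$ together with one family $\gamma_{i,n}^{[n]}$ for each height $n$, where the $\gamma_{i,n}$ normally generate $\pi_1(\sigma(n))$. Deleting some of the $\gamma_{i,n}^{[n]}$ therefore passes to a function $\sigma_S \preceq \sigma_0$ obtained by \emph{enlarging} the covers $\sigma_0(n)$: the retained relators normally generate a smaller normal subgroup of $\pi_1(L)$, hence a bigger normal cover, and $\langle X\mid S\rangle = G_L(\sigma_S)$. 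By \Cref{thm:typefpn}, finiteness of $G_L(\sigma_S)$ over a ring $R'$ is governed entirely by whether $\tilde H_i(\sigma_S(n);R')$ vanishes for all but finitely many $(i,n)$ with $i<2$, i.e. by reduced homology in degrees $0$ and $1$; as the covers are connected this reduces to controlling $H_1(\sigma_S(n);R') = \pi_1(\sigma_S(n))_{ab}\otimes R'$ (universal coefficients applies since $H_0$ is free). The crucial observation is that enlarging a cover is \emph{not} monotone on $H_1$: I want a base cover whose fundamental group is perfect (so $H_1$ vanishes and $G$ is $FP_2(\Z)$) but which admits a larger cover with free first homology, so that deleting the appropriate relators destroys $FP_2$ over \emph{every} ring simultaneously.

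Concretely, take $\pi_1(L) = (A_5*A_5)\times\Z$, realised by some finite flag complex $L$, and write $P = (A_5*A_5)\times\{0\}$ and $N = \ker(A_5*A_5 \to A_5\times A_5)\times\{0\}$. Then $P$ is a proper normal subgroup that is perfect, so the normal cover $M$ with $\pi_1(M) = P$ has $H_1(M;\Z) = 0$ and deck group $\pi_1(L)/P \cong \Z$. By the Kurosh subgroup theorem $N$ is a nontrivial free group; it is normal in $\pi_1(L)$ with free abelianization, so the larger normal cover $M'$ with $\pi_1(M') = N$ has $H_1(M';\Z)$ free and nonzero, with deck group $\pi_1(L)/N \cong (A_5\times A_5)\times\Z$. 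Note that $\Z$, the trivial group, and $(A_5\times A_5)\times\Z$ are all of type $FP_\infty(R')$ for every ring $R'$ (finite groups and $\Z$ are $FP_\infty$ over any ring, and this is preserved under direct products), so the deck-group hypotheses of \Cref{thm:typefpn} hold for all of the covers that appear, over every ring. I set $\sigma_0(0) = L$ and $\sigma_0(n) = M$ for all $n\neq 0$; then $\tilde H_i(\sigma_0(n);\Z)$ is nonzero only for the single pair $(i,n) = (1,0)$, so $G = G_L(\sigma_0)$ is of type $FP_2(\Z)$ by \Cref{thm:typefpn}. Let $R$ denote its relator set.

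For the deletion step I choose the normal generating sets compatibly. Since $A_5$ is finite, $N$ is normally generated in $P$ by finitely many commutators $c_1,\dots,c_m$, and these also normally generate $N$ in all of $\pi_1(L)$ because $N$ is normal. At each height $n\neq 0$ I take as normal generating set for $\pi_1(\sigma_0(n)) = P$ the set $\{a_1,a_2,b_1,b_2,c_1,\dots,c_m\}$, where $a_\bullet,b_\bullet$ generate the two copies of $A_5$; deleting $a_1,a_2,b_1,b_2$ leaves a set normally generating exactly $N$. Now, given any finite $T\subset R$, let $N_0$ be the finite set of heights occurring in $T$. Form $S$ by keeping every relator at heights in $N_0$ together with every triangle relator, but, at each height $n\notin N_0$ with $n\neq 0$, deleting $a_1^{[n]},a_2^{[n]},b_1^{[n]},b_2^{[n]}$ and keeping $c_1^{[n]},\dots,c_m^{[n]}$. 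Then $T\subseteq S\subseteq R$ and $\langle X\mid S\rangle = G_L(\sigma_S)$, where $\sigma_S(n) = M'$ for all but finitely many $n$. For every ring $R'$ we have $H_1(M';R') = N_{ab}\otimes R' \neq 0$, so $\tilde H_1(\sigma_S(n);R')\neq 0$ for infinitely many $n$; since the deck groups are $FP_\infty(R')$, \Cref{thm:typefpn} shows that $G_L(\sigma_S)$ is not of type $FP_2(R')$, for every ring $R'$.

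The main obstacle is arranging the two cover-theoretic phenomena at once: the base cover must be genuinely proper (so that relators are present and can be deleted) yet have perfect fundamental group (so that $G$ is $FP_2(\Z)$), while a cover obtained from it by deleting relators must have \emph{free} first homology (so that $FP_2$ fails over all rings, not merely over $\Q$ or some $\Z/p\Z$). The free-product kernel $N\trianglelefteq A_5*A_5$ is exactly what makes this possible: it is free, hence has free abelianization detected by every ring, yet sits inside a perfect group, and the direct factor $\times\Z$ supplies the proper perfect normal subgroup $P$. The remaining care is bookkeeping: choosing the normal generating sets so that the relators retained after deletion normally generate precisely $N$, and checking that the finitely many heights pinned down by $T$ never interfere with the infinitely many heights at which free homology is created.
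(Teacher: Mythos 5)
Your proposal is correct, and at the level of method it is the same proof as the paper's: fix the presentation of \cref{thm:presentationforgroups}, observe that a finite set $T$ of relators pins down only finitely many heights, at every other height delete the relators that normally generate the ``large'' subgroup while retaining a family normally generating a normal subgroup whose first homology is nonzero over every ring, and conclude from the only-if direction of \cref{thm:typefpn}. The concrete inputs differ, and the difference is worth recording. The paper takes $\pi_1(L)=\Z^3\rtimes SL_3(\Z)$, which is perfect, so the ambient presentation is the Dicks--Leary presentation of $BB_L$ itself (of type $FP_2(\Z)$ by the Bestvina--Brady criterion, since $H_1(L;\Z)=0$), and deletion lands on the cover $K$ with $\pi_1(K)=\Z^3$ and $H_1(K;R)=R^3$; the relevant deck groups are $SL_3(\Z)$ and $\pi_1(L)$, both finitely presented. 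You instead take the non-perfect group $(A_5*A_5)\times\Z$ and push the perfectness one level down into the covers: your ambient group is a proper branched-cover group $G_L(\sigma_0)$ with $\sigma_0(n)=M$, $\pi_1(M)=P=(A_5*A_5)\times\{0\}$ perfect, and deletion lands on the free normal subgroup $N=\ker(A_5*A_5\to A_5\times A_5)$, whose free abelianization is detected by every ring. Your bookkeeping is sound: the commutators $c_1,\dots,c_m$ normally generate $N$ in $P$, hence also in $\pi_1(L)$ because $N\trianglelefteq\pi_1(L)$, so the retained relators present exactly $G_L(\sigma_S)$ with $\sigma_S(n)=M'$ at all but finitely many heights; and the single nonvanishing pair $(1,0)$ coming from $H_1(L;\Z)=\Z$ is harmless for the $FP_2(\Z)$ claim since \cref{thm:typefpn} only requires vanishing at all but finitely many pairs. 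Your route is slightly more self-contained --- it never invokes the Bestvina--Brady criterion or the Dicks--Leary presentation, only the paper's own \cref{thm:presentationforgroups,thm:typefpn} --- at the modest cost of verifying the deck-group hypotheses for the quotients $\Z$, $(A_5\times A_5)\times\Z$ and the trivial group, which you do. One small technical point: the paper chooses $L$ to be a flag complex \emph{with no local cut points}, a hypothesis inherited from Leary's machinery that underlies \cref{thm:presentationforgroups}; you should add this to your choice of $L$, which is harmless since every finitely presented group is the fundamental group of such a finite flag complex.
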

\begin{proof}
	Throughout the proof let $G = \Z^3\rtimes SL_3(\Z)$ and $N = \Z^3\triangleleft G$. 
	There are two properties of this pair we will use namely, that $G$ is perfect (see for instance \cite{condersl3}) and so $H_1(G; R) = 0$ for all rings $R$ and $H_1(N; R) = R^3$ for all rings $R$. 
	
	Let $L$ be a flag complex with no local cut points and fundamental group $G$. 
	Let $K$ be the cover corresponding to $N$. 
	Let $E$ be the set of edges of $L$. 
	Let $\alpha_i = (e_{1,i}, \dots, e_{n_i,i})$ be a sequence of loops in $L$ that generate $N$. 
	Let $\beta_i = (f_{1, i}, \dots, f_{m_i, i})$ be a sequence of loops in $L$ that generate $G$.
	We can obtain the following presentation for $BB_L$ from \cite{Dicks}:
	$$\langle E\mid efg, gfe \text{ for each triangle $e, f, g$ in $L$ }, \alpha_i^{[n]}, \beta_i^{[n]} \text{ for $n\in \Z$ and all } i \rangle.$$
	Since $G$ is perfect, we obtain from \cite{BeBr} that $BB_L$ is of type $FP_2(\Z)$. 
	
	Let $T$ be a finite subset of the relations of $BB_L$. 
	Let $F = \{n\mid \exists i$ such that $\beta_i^{[n]}\in T\}$. 
	Let $S$ be the the union of the following sets of relations: 
	\begin{itemize}
		\item $T$, 
		\item all triangle relations, 
		\item $\alpha_i^{[n]}$ for all $n\in \Z$, 
		\item $\beta_i^{[n]}$ for $n\in F$
	\end{itemize}
	
	Consider the subpresentation $$\langle E\mid S\rangle$$
	This is a presentation of $G_L(\sigma)$, where 
	$$\sigma(n) = 
	\begin{cases}
	K, &\text{ if } n\notin F, \\
	L, &\text{ if } n\in F. 		
	\end{cases}$$
	Since $F$ is a finite set there are infinitely many vertices such that the ascending and descending link have non-trivial first homology with coefficients in $R$. 
	Thus $G_L(\sigma)$ is not of type $FP_2(R)$ by \cref{thm:typefpn}. 
\end{proof}

\section{Groups that are $FP_2$ over fields}

We are now ready to prove the following theorem. 
\begin{theorem}\label{thm:acyclicoverQandP}
	There exists groups that are of type $FP_2(\Q)$ and $FP_2(\Z/p\Z)$ for all $p$ but not of type $FP_2(\Z)$. 
\end{theorem}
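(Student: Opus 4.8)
The plan is to apply \cref{thm:typefpn} with a carefully chosen sequence of covers $\sigma$, exploiting the same arithmetic input as in \cref{thm:subpresnotfp2}: the pair $G = \Z^3\rtimes SL_3(\Z)$ with $N = \Z^3$. The key homological facts are that $H_1(N;\Z) = \Z^3$ is nontrivial, yet I want a cover whose reduced homology vanishes over every field while remaining nonzero over $\Z$. To engineer this I would not use $N$ directly but rather choose the normal subgroups so that the relevant first homology of the cover $\sigma(n)$ is a fixed \emph{finite} abelian group $A$ whose order is not a prime power — the prototypical example being $A = \Z/6\Z$ or more simply arranging $\tilde H_1(\sigma(n);\Z)$ to be a group like $\Z/p_1\Z \oplus \Z/p_2\Z$ summed across infinitely many $n$ so that over any single field at most finitely many $n$ contribute. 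Concretely, I would let $L$ be a flag complex with $\pi_1(L) = G$ and no local cut points, and for each $n$ let $\sigma(n)$ be the normal cover corresponding to a subgroup of $\pi_1(L)$ chosen so that $\tilde H_i(\sigma(n);\Z)$ is concentrated in degree $1$ and equals $\Z/p(n)\Z$, where $p(n)$ enumerates the primes so that each prime appears only finitely often (or not at all beyond a bounded range).

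The heart of the argument is the following elementary observation about essential triviality across coefficient rings. Over a field $F$ of characteristic $p$, we have $\tilde H_1(\sigma(n);F) = 0$ precisely when $p(n) \neq p$ (by the universal coefficient theorem, since $\Z/q\Z \otimes F = 0$ and $\operatorname{Tor}(\Z/q\Z, F) = 0$ whenever $q$ is coprime to $p$, and over $\Q$ all torsion dies). Therefore, if I arrange that for each fixed prime $p$ only finitely many $n$ have $p(n) = p$, then for every field $F$ the homology $\tilde H_i(\sigma(n);F)$ vanishes for all but finitely many pairs $(i,n)$ with $i < 2$, so \cref{thm:typefpn} yields $FP_2(F)$. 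Meanwhile $\tilde H_1(\sigma(n);\Z) = \Z/p(n)\Z \neq 0$ for infinitely many $n$, so the integral hypothesis of \cref{thm:typefpn} fails and $G_L(\sigma)$ is not of type $FP_2(\Z)$. I must also check the standing hypothesis of \cref{thm:typefpn}, namely that $\pi_1(L)/\pi_1(\sigma(n))$ is of type $FP_2(R)$ for all $n$; choosing the quotient deck groups among groups of type $FP_2$ (e.g. finite groups, or $SL_3(\Z)$ itself) handles this.

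The main obstacle is producing, for infinitely many values, a flag complex $L$ together with a single ambient fundamental group $G$ admitting normal covers $\sigma(n)$ with prescribed first homology $\Z/p(n)\Z$, all sitting inside the \emph{same} $L$ and each with the required deck-group finiteness. This is the delicate point: the covers must be covers of one fixed $L$, so the target homology groups are constrained by the subgroup lattice of $\pi_1(L) = G$. I expect to resolve this by leveraging the abelianization of $N = \Z^3$: since $H_1(N;\Z) = \Z^3$, the subgroups of $N$ of the form $p\Z \times \Z \times \Z$ (or kernels of maps $\Z^3 \to \Z/p\Z$) give normal covers of $L$ whose first homology contains the desired cyclic torsion, and varying $p$ across $n$ via a surjection $N \to \Z/p(n)\Z$ produces the needed family. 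Verifying that these covers are genuinely \emph{normal} in $\pi_1(L)$ (not merely in $N$) and computing $\tilde H_1$ of the cover precisely — distinguishing the contribution of $N$ from that of the quotient $SL_3(\Z)$, which is perfect and hence contributes no $H_1$ — is where the real care is required.

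Once the family $\sigma$ is in hand, the conclusion is immediate from \cref{thm:typefpn}: the integral obstruction is the infinitely many nonvanishing $\tilde H_1(\sigma(n);\Z)$, while the fieldwise vanishing is guaranteed by the coprimality/characteristic argument above, giving a group that is $FP_2(\Q)$ and $FP_2(\Z/p\Z)$ for all primes $p$ but not $FP_2(\Z)$. Because it suffices to check prime subfields, the reduction mentioned in the introduction then upgrades this to $FP_2(F)$ for all fields $F$.
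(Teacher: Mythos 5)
Your high-level strategy is the right one --- and in fact it is the paper's: arrange covers $\sigma(n)$ whose $\tilde H_1$ over $\Z$ is finite $p(n)$-torsion with each prime occurring only finitely often, use universal coefficients to get vanishing over $\Q$ and over each $\Z/p\Z$ for all but finitely many $n$, and feed this into \cref{thm:typefpn}. But the engine you propose for producing such covers fails, in two distinct ways. First, the homology is wrong: every subgroup of $N=\Z^3$ is free abelian, so the cover of $L$ corresponding to $\ker(\Z^3\to\Z/p(n)\Z)\cong\Z^3$ has first homology the abelianization $\Z^3$ --- torsion-free and nonzero over $\Q$ and over every $\Z/p\Z$. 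This is the exact opposite of what you need: far from "containing the desired cyclic torsion," these covers break the fieldwise vanishing at \emph{every} field, for \emph{every} $n$. Torsion in $H_1$ of the cover cannot be manufactured from subgroups of an abelian normal subgroup. Second, the normality problem you flag as "where the real care is required" is not repairable within your setup: since $SL_3(\Z)$ acts transitively on primitive vectors of $\Z^3$ (note $e_2,e_3\in p\Z\times\Z\times\Z$), the normal closure of $\ker(\Z^3\to\Z/p\Z)$ in $G=\Z^3\rtimes SL_3(\Z)$ is all of $N$, and the only $SL_3(\Z)$-invariant subgroups of $\Z^3$ are the $m\Z^3$, again with torsion-free abelianization. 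So no choice of subgroup of $N$ yields a normal cover of $L$ with finite torsion $H_1$, and the family $\sigma$ you need never materializes.

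The missing idea is to abandon the pair $(\Z^3\rtimes SL_3(\Z),\,\Z^3)$ altogether --- in the paper that pair serves \cref{thm:subpresnotfp2}, where nonvanishing of $H_1$ over \emph{every} ring is precisely the point, so your instinct to recycle it pulls in the wrong direction. Instead take $\pi_1(L)=SL_3(\Z)$ and use its level-$p$ congruence subgroups $\Gamma(p)$: these are normal and of finite index, so the deck groups $SL_3(\Z/p\Z)$ are finite and in particular of type $FP_2(R)$ for every ring $R$, and by the Lee--Szczarba computation $H_1(\Gamma(p);\Z)\cong(\Z/p\Z)^8$ --- exactly the finite, single-prime torsion your universal-coefficients argument requires. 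Defining $\sigma(n)$ to be the cover corresponding to $\Gamma(n)$ when $n>2$ is prime and a trivial-homology cover otherwise, your computation ($H_1(\sigma(n);\Q)=0$ for all $n$; $H_1(\sigma(n);\Z/p\Z)\neq 0$ iff $n=p$; $H_1(\sigma(n);\Z)\neq 0$ for infinitely many $n$) then goes through verbatim, and \cref{thm:typefpn} gives the theorem, with the prime-subfield reduction upgrading to all fields as you say.
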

\begin{proof}
	To do this we find a finitely presented group $G$ with a sequence of subgroups $G_n$ such that 
	$$H_1(G_n ; \Z) = \begin{cases}
		\Z_n^m,  &\text{ if  $n$ is prime,}\\
		0 , &\text{ if $n$ is not prime.}
	\end{cases}$$

	Let $G = SL_3(\Z)$. 
	Let $G_p$ be the level $p$ congruence subgroup. 
	By \cite{Lee}, we have that $H_1(G_p ; \Z) = \Z/p\Z^8$. 
	
	Now let $L$ be a flag complex with no local cut points with fundamental group $G$. 
	Let $G_n$ be the level $n$ congruence subgroup if $n > 2$ is prime and the trivial subgroup otherwise. 
	Let $L_n$ be the cover corresponding to $G_n$. 
	Let $\sigma$ be the function assigning $n$ to $L_n$. 
	
	In this case $L_n$ is a trivial or finite cover of $L$. 
	In either case, the quotient is finitely presented and hence of type $FP_2$ over any ring. 
	
	Since all the homology groups considered are finite we see that $H_1(\sigma(n); \Q)$ vanishes for all $n$. 
	Also $H_1(\sigma(n); \Z/p\Z)$ is non-trivial if and only if $n = p$. 
	Thus we can apply \cref{thm:typefpn} to see that $G_L(\sigma)$ is of type $FP_2(\Q)$ and $FP_2(\Z/p\Z)$ for all $p$. 
	
	However, there are infinitely many $n$ with $H_1(\sigma(n); \Z)$ non-trivial. Thus, by \Cref{thm:typefpn} we see that $G_L(\sigma)$ is not of type $FP_2(\Z)$. 
\end{proof}

One would imagine that it is possible to prove the corresponding result for $FP_k$ or even $FP$. 
The above theorem gives a template for how to do this. 
To prove the above theorem for type $FP_k$ one would need a flag complex $L$ and a sequence of normal covers $L_n$ satisfying the following conditions: 
\begin{itemize}
	\item For infinitely many $n$, there is an $i < k$ such that $H_i(L_n; \Z)$ does not vanish.  
	\item For all but finitely many pairs $(i, n)$ with $i<k$, we have that $H_i(L_n; \Q)$  vanishes. 
	\item For each prime $p$, we have that for all but finitely many pairs $(i, n)$, with $i<k$, we have that $H_i(L_n; \Z/p\Z)$ vanishes. 
	\item For all $n$ and all $p$ the quotient $\pi_1(L)/\pi_1(L_n)$ is of type $FP_k(\Q)$ and $FP_k(\Z/p\Z)$. 
\end{itemize}
For the $FP$ result replace $FP_k$ by $FP$ and remove the $i<k$ assumption throughout. 

In a similar way to \cref{thm:acyclicoverQandP}, we can prove the following theorem.

\begin{theorem}\label{thm:setofprimes}
	Let $\mathcal{P}$ be the set of primes. 
	For each subset $S$ of $\mathcal{P}$ there is a group which is type $FP_2(\Z/p\Z)$ if and only if $p\notin S$. 
	
	Moreover, we can construct such a group that has a proper action on a 3-dimensional CAT(0) cube complex.
\end{theorem}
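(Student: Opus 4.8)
The plan is to follow the template of \cref{thm:acyclicoverQandP}, again taking $G = SL_3(\Z)$ and its congruence subgroups, but now distributing the congruence covers across the integer heights so that exactly the primes of $S$ are detected. First I would fix a $2$-dimensional flag complex $L$ with no local cut points and $\pi_1(L) = SL_3(\Z)$; since $SL_3(\Z)$ is finitely presented such an $L$ exists, and its being $2$-dimensional is what will ultimately force the cube complex to be $3$-dimensional. For each prime $p$ let $L_p$ be the normal cover of $L$ corresponding to the level-$p$ congruence subgroup $G_p \triangleleft SL_3(\Z)$ (these are normal, so $L_p \in \CC$). By Lee's computation $H_1(G_p;\Z) = (\Z/p\Z)^8$, and since the first homology of a space is the abelianization of its fundamental group we get $H_1(L_p;\Z) = (\Z/p\Z)^8$; the universal coefficient theorem then gives $H_1(L_p; \Z/q\Z) \neq 0$ if and only if $q = p$. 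Because $SL_3(\Z)$ is perfect, $H_1(L;\Z/q\Z) = 0$ for every prime $q$.

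Next I would build $\sigma$. If $S = \emptyset$, take $\sigma \equiv L$, so that $G_L(\sigma) = BB_L$ is $FP_2(\Z)$ and acts freely on the $3$-dimensional CAT(0) cube complex $X_L$. Otherwise enumerate $S = \{p_1, p_2, \dots\}$, partition $\Z \smallsetminus \{0\}$ into infinite sets $A_1, A_2, \dots$, one for each element of $S$, and set $\sigma(n) = L_{p_j}$ for $n \in A_j$ and $\sigma(0) = L$. The key bookkeeping is that every nontrivial cover used is some $L_{p_j}$ with $p_j \in S$, and each $p_j$ occurs at the infinitely many heights of $A_j$. Hence for $q \notin S$ we have $\tilde{H}_i(\sigma(n); \Z/q\Z) = 0$ for all $n$ and all $i < 2$, while for $q = p_j \in S$ we have $H_1(\sigma(n); \Z/q\Z) \neq 0$ at infinitely many heights.

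I would then invoke \cref{thm:typefpn}. Its hypothesis holds because every quotient $\pi_1(L)/\pi_1(\sigma(n))$ is either trivial or $SL_3(\Z/p\Z)$, hence finite and so of type $FP_2$ over every ring. The theorem then yields that $G_L(\sigma)$ is $FP_2(\Z/q\Z)$ for $q \notin S$ (zero nonzero pairs $(i,n)$) and not $FP_2(\Z/q\Z)$ for $q \in S$ (infinitely many nonzero pairs), which is the main assertion. For the moreover, $X_L^\sigma$ is CAT(0) by the theorem of Section 3 and has dimension $\dim L + 1 = 3$, since each $k$-simplex of $L$ contributes a $(k+1)$-torus to the Salvetti complex. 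The $G_L(\sigma)$-action is free away from vertices, and the stabilizer of a vertex at height $n$ is the finite deck group $\pi_1(L)/\pi_1(\sigma(n))$, so the action is proper.

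The main obstacle is the very first step: producing a genuinely $2$-dimensional flag complex with no local cut points realizing $SL_3(\Z)$ (a $1$-complex cannot avoid local cut points, so dimension $2$ is both necessary and the delicate case), and confirming that the congruence covers have the claimed mod-$q$ homology. Everything after that is bookkeeping layered on top of \cref{thm:typefpn}, together with the verification of dimension and properness.
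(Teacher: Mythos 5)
Your proposal follows the same route as the paper's proof: realize congruence covers of a flag complex $L$ as the values of $\sigma$, check the hypothesis of \cref{thm:typefpn} via finiteness of the quotients $\pi_1(L)/\pi_1(\sigma(n))$, read off $FP_2(\Z/q\Z)$ from the mod-$q$ homology of the covers, and get properness (finite vertex stabilisers from finite-index covers) and dimension $3$ from a $2$-dimensional $L$. Your bookkeeping device --- partitioning $\Z\smallsetminus\{0\}$ into infinite sets $A_j$, one per prime of $S$ --- is interchangeable with the paper's single sequence $(a_n)$ on $n\geq 0$ containing each $p_i$ infinitely often (with the cover $\bar{L}$ at negative heights); that difference is immaterial.

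The genuine gap is at the prime $2$. You invoke \cite{Lee} to conclude $H_1(G_p;\Z)=(\Z/p\Z)^8$ for \emph{every} prime $p$, but the Lee--Szczarba computation is for odd primes; the level-$2$ congruence subgroup of $SL_3(\Z)$ is exactly the case the cited result does not cover, and its abelianization is not given by that theorem. The paper is deliberately careful about this twice: in \cref{thm:acyclicoverQandP} congruence subgroups are used only at primes $n>2$, and in the proof of \cref{thm:setofprimes} the paper takes $\pi_1(L)=GL_3(\Z)$ rather than $SL_3(\Z)$ and sets $G_2=GL_3(\Z)$, so that the prime $2$ is detected by $H_1(GL_3(\Z);\Z)=\Z/2\Z$ --- that is, by the cover $L$ itself --- while the double cover $\bar{L}$ corresponding to the perfect group $SL_3(\Z)$ supplies vanishing homology at the remaining heights. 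Note that your argument really needs $H_1(L_2;\Z)$ to be a nontrivial finite $2$-group, not merely nontrivial: if $2\in S$ and $H_1(\Gamma_3(2);\Z)$ had $q$-torsion for some odd prime $q\notin S$, then the infinitely many heights carrying $L_2$ would have $H_1(\sigma(n);\Z/q\Z)\neq 0$, and \cref{thm:typefpn} would destroy type $FP_2(\Z/q\Z)$, breaking the ``if'' direction of your statement. So as written the $p=2$ step is unjustified; it is repairable either by importing a separate computation of $H_1(\Gamma_3(2);\Z)$ showing it is a nontrivial $2$-group, or, as in the paper, by the $GL_3(\Z)$ device. Everything else in your proposal matches the paper's proof.
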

\begin{proof}
	Let $L$ be a flag complex with fundamental group $GL_3(\Z)$. 
	For $p>2$, let $G_p$ denote the level $p$ congruence subgroup in $SL_3(\Z)$. 
	Note that $G_p$ is still finite index and normal in $GL_3(\Z)$. 
	Let $G_2 = GL_3(\Z)$. 
	Let $\bar{L}$ be the cover of $L$ corresponding to $SL_3(\Z)$. 
	Let $L_{p}$ be the cover of $L$ corresponding to $G_p$. 
	Thus $H_1(L_p; \Z)$ is a finite $p$-group.
	
	Let $S = \{p_1, p_2, p_3, \dots\}$
	Let $(a_n)_{n\in \N}$ be any sequence which contains $p_i$ infinitely many times for each $p_i\in S$. 
	For instance, we can take the sequence 
	$$p_1, p_1, p_2, p_1, p_2, p_3, p_1, p_2, p_3, p_4, p_1, \dots.$$
	
	Define $\sigma\colon \Z\to \CC$ as follows
	$$\sigma(n) = 
	\begin{cases}
	\bar{L}, &\text{if $n<0$},\\
	L_{a_n}, &\text{if $n\geq 0$}.
	\end{cases}$$
	Since each $p_i$ appears infinitely many times in $(a_n)$ we have by \cref{thm:typefpn}, that $G_L(\sigma)$ is $FP_2(\Z/p\Z)$ if and only if $p\notin S$. 
	
	Since all the covers taken were finite index we see that all the vertex stabilisers are finite. Thus the action of $G_L(\sigma)$ on $X_L^\sigma$ is proper. 
\end{proof}

\section{Uncountably many quasi-isometry classes}

We can show that there are uncountably many quasi-isometry classes of groups as in \cref{thm:setofprimes}. 
The proof given here closely follows that of \cite{KLS}.
The idea of the proof is to interleave the sequence of covers from \cref{thm:setofprimes}, with the sequences of universal covers as in \cite{KLS}. 
Thus, we can use the sequence from covers from \cref{thm:setofprimes} to obtain the desired finiteness properties and we use the relations (or lack thereof) from the universal covers to obtain uncountably many quasi-isometry classes.

Let $\sigma, \sigma'\in \CZ$ with $\sigma\prec\sigma'$. 
Let $M(\sigma, \sigma') = \min\{|n| \mid \sigma(n)\neq \sigma'(n)\}$. 

\begin{lemma}\label{lem:minlength}
	Suppose that $L$ is $d$-dimensional and $\sigma, \sigma'\in \CZ$ with $\sigma\prec\sigma'$ and $\sigma(0) = \sigma'(0) = L$, and take the standard generating set for $G_L(\sigma)$ and $G_L(\sigma')$. The word length of any non-identity element in the kernel of the map $G_L(\sigma)\to G_L(\sigma')$ is at least $M(\sigma, \sigma')\sqrt{2/d+1}.$
\end{lemma}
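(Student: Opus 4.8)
The plan is to relate the word length of an element in the kernel of $G_L(\sigma)\to G_L(\sigma')$ to the geometry of the branched cover $X_L^\sigma\to X_L^{\sigma'}$ produced in \Cref{thm:covers}, and then to argue that any nontrivial kernel element must ``travel'' far enough in the height function $f_\sigma$ to reach a level $n$ with $|n|\geq M(\sigma,\sigma')$ where the covers genuinely differ. The factor $\sqrt{2/(d+1)}$ should emerge from comparing combinatorial distance in the $1$-skeleton to the Euclidean distance in a single cube of dimension at most $d+1$, since $L$ is $d$-dimensional and the cubes of $X_L^\sigma$ have dimension one more than the simplices of $L$.

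First I would set up the correspondence between group elements and the cube-complex geometry. Since $\sigma(0)=\sigma'(0)=L$, the group $G_L(\sigma)$ acts freely and transitively on the vertices of $f_\sigma^{-1}(0)$, whose $1$-skeleton is the Cayley graph with respect to the edge generating set $E$ (by the corollary following \Cref{thm:covers}). Thus the word length of $g\in G_L(\sigma)$ equals the combinatorial distance in this Cayley graph from the basepoint to $g\cdot\text{(basepoint)}$. An element $g$ in the kernel of $G_L(\sigma)\to G_L(\sigma')$ maps to the identity downstairs, so its geodesic word, read as an edge-path in $X_L^\sigma$, projects to a \emph{closed} loop in $X_L^{\sigma'}$ under the level-preserving branched cover.

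Next I would use \Cref{lem:trivial} as the key mechanism. The point is that the covers $\sigma(n)$ and $\sigma'(n)$ agree for all $n$ with $|n|<M(\sigma,\sigma')$, so by \Cref{lem:trivial} any loop $\gamma$ in $L$ lifts to a loop in $\sigma(n)$ exactly when it lifts to a loop in $\sigma'(n)$, meaning the two branched covers are \emph{locally identical} over all vertices at heights strictly between $-M$ and $M$. Consequently a nontrivial kernel element cannot be detected by anything happening at those heights; its geodesic representative must push up or down to reach a vertex at height $\pm M(\sigma,\sigma')$ before the cover ``unwraps'' differently. This forces the edge-path to change the value of $f_\sigma$ by at least $M(\sigma,\sigma')$ in absolute value. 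The core estimate is then that an edge-path whose $f_\sigma$-value changes by $M$ must have combinatorial length at least $M\sqrt{2/(d+1)}$: each generator is an edge of a cube of dimension $\leq d+1$, the Morse function $f_\sigma$ restricted to such a cube is affine of gradient bounded in terms of the diagonal length $\sqrt{d+1}$, so a single edge changes $f_\sigma$ by at most $\sqrt{(d+1)/2}$, and summing over the path gives the bound.

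\textbf{The main obstacle} I anticipate is making rigorous the claim that a nontrivial kernel element is forced to reach height $\pm M(\sigma,\sigma')$ — in other words, ruling out that $g$ could be killed purely by relations at low heights. The clean way to handle this is to observe that over the height band $|n|<M(\sigma,\sigma')$ the branched covers $X_L^\sigma$ and $X_L^{\sigma'}$ are isomorphic (as the attaching loops $\gamma_{i,n}$ coincide there), so the restriction of the branched covering map to $f_\sigma^{-1}\big((-M,M)\big)$ is an \emph{isomorphism} onto $f_{\sigma'}^{-1}\big((-M,M)\big)$, not merely a covering. Hence any path staying within this band projects injectively, so a kernel element represented by such a path would already be trivial upstairs — contradiction. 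This reduces the problem to the geometric gradient estimate, which is the routine per-cube computation sketched above; I would record the constant carefully since it is the content of the stated bound.
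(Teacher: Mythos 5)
There is a genuine gap, and it sits exactly where you placed your ``clean way to handle the main obstacle.'' Your claim that the branched cover restricts to an \emph{isomorphism} $f_\sigma^{-1}((-M,M))\to f_{\sigma'}^{-1}((-M,M))$ is false. On that band the map is an honest covering, but a regular one with deck group the full kernel $K$ of $G_L(\sigma)\to G_L(\sigma')$: since $\sigma(n)=\sigma'(n)$ for $|n|<M$, vertex stabilisers in the band inject under $G_L(\sigma)\to G_L(\sigma')$, so $K\neq 1$ acts freely on the (connected) band, and the restriction is a nontrivial covering with $|K|$ sheets. Your own reduction then self-destructs: every word in the standard generators traces a path in the level set $f_\sigma^{-1}(0)$ --- each generator is a constant-height diagonal of a square --- so \emph{every} such path ``stays within the band,'' and your argument would conclude that every kernel element is trivial, i.e.\ $K=1$, which is absurd (by \cref{lem:trivial}, $\sigma\prec\sigma'$ strict forces $K\neq 1$). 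For the same reason your core metric estimate is aimed at the wrong object: a geodesic edge-path for a kernel element never changes $f_\sigma$ at all, so no per-edge gradient bound on the path can produce the lower bound; nothing ``forces the edge-path to change the value of $f_\sigma$ by at least $M$.''

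What actually reaches height $\pm M$ is a \emph{filling disk}, not the path. The correct mechanism (this is the content of Lemmas 3.1 and 3.2 of \cite{KLS}, which the paper's proof simply cites) is: the projected loop in $X_L^{\sigma'}$ is null-homotopic since $X_L^{\sigma'}$ is CAT(0); if some null-homotopy avoided all branch vertices at heights where $\sigma\neq\sigma'$, it would lift through the honest covering over the complement of those vertices, closing up the lifted path and forcing $g=1$; hence every filling meets a vertex with $|f_{\sigma'}|\geq M$. One then takes the specific filling obtained by coning the loop from a basepoint along CAT(0) geodesics: a word of length $l$ has CAT(0) length $\sqrt{2}\,l$ (each generator is a diagonal of a unit square), so every point of the coning disk is within CAT(0) distance $\sqrt{2}\,l/2$ of the basepoint, and $f_{\sigma'}$ is $\sqrt{d+1}$-Lipschitz on the $(d+1)$-dimensional complex, giving $M\leq\sqrt{d+1}\cdot\sqrt{2}\,l/2$, i.e.\ $l\geq M\sqrt{2/(d+1)}$. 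So while your constants $\sqrt{2}$ and $\sqrt{d+1}$ are the right ones, they enter through the Lipschitz control of a geodesic coning disk, and your proposal is missing precisely this idea --- the lifting criterion applied to a null-homotopy plus the CAT(0) filling estimate --- without which the bound cannot be recovered.
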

\begin{proof}
	This follows from Lemmas 3.1 and 3.2 of \cite{KLS}.
\end{proof}

We will use the taut loop length spectrum of Bowditch \cite{bowditch}. 

\begin{definition}
	Let $\Gamma$ be a graph and $l\in \N$. 
	Let $\Gamma_l$ denote the 2-complex with 1-skeleton $\Gamma$ and a 2-cell attached to each loop of length $<l$. 
	An edge loop of length $l$ is {\em taut} if it is not null-homotopic in $\Gamma_l$. 
	Bowditch's {\em taut loop length spectrum}, $H(\Gamma)$ is the set of lengths of taut loops.
\end{definition}

\begin{definition}
	Let $H,  H'$ be two sets of natural numbers. We say that $H$ and $H'$ are {\em $k$-related} if for all $l\geq k^2 +2k +2 $, whenever $l\in H$ then there is some $l'\in H'$ such that $\frac{l}{k}\leq l'\leq lk$ and vice versa. 
\end{definition}

The key element from \cite{bowditch} is the following relating the taut loop length spectrum to quasi-isometries. 

\begin{lemma}
	If (connected) graphs $\Gamma$ and $\Lambda$ are $k$-quasi-isometric, then $H(\Gamma)$ and $H(\Lambda)$ are $k$-related.
\end{lemma}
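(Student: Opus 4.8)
This is a quasi-isometry invariance statement for Bowditch's taut loop length spectrum, so the plan is to transport taut loops across the quasi-isometry and control how their lengths change. Fix a $k$-quasi-isometry $f\colon\Gamma\to\Lambda$ together with a coarse inverse $\bar f\colon\Lambda\to\Gamma$, so that $\bar f\circ f$ and $f\circ\bar f$ each displace every vertex by a distance bounded in terms of $k$. By symmetry it suffices to treat one direction: given $l\in H(\Gamma)$ with $l\geq k^2+2k+2$, I would produce some $l'\in H(\Lambda)$ with $l/k\leq l'\leq lk$, the reverse direction following by swapping the roles of $\Gamma$ and $\Lambda$. Fix such an $l$ together with a taut loop $\gamma$ of length $l$, that is, an edge loop that is non-trivial in $\Gamma_l$.

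First I would push $\gamma$ forward: joining the $f$-images of consecutive vertices of $\gamma$ by geodesics in $\Lambda$ yields an edge loop $\hat\gamma$ of length at most a linear-in-$k$ multiple of $l$. The key claim is that $\hat\gamma$ is not null-homotopic in $\Lambda_{l'}$ for $l'$ of order $l/k$. I would argue by contraposition: a null-homotopy of $\hat\gamma$ in $\Lambda_{l'}$ writes $\hat\gamma$ as a product of conjugates of loops of length $<l'$; applying $\bar f$ to such a filling multiplies lengths by at most $k$ (up to a bounded additive error), yielding loops of length $<l$ in $\Gamma$, while $\bar f_{\#}\hat\gamma$ is joined to $\gamma$ by a homotopy through correction loops whose length is bounded purely in terms of $k$, hence $<l$ because $l\geq k^2+2k+2$. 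Combining these would exhibit $\gamma$ as null-homotopic in $\Gamma_l$, contradicting its tautness.

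With $\hat\gamma$ non-trivial in $\Lambda_{l'}$ in hand, I would extract a genuine taut loop. A straightforward induction on length expresses any edge loop as a product of conjugates of taut loops, each of length at most that of the original loop. If every taut factor of $\hat\gamma$ had length $<l'$, then $\hat\gamma$ would be a product of conjugates of loops of length $<l'$ and so trivial in $\Lambda_{l'}$; hence some factor is a taut loop of length $l''\geq l'$. As this factor has length at most that of $\hat\gamma$, which is of order $kl$, it furnishes an element $l''\in H(\Lambda)$ lying between roughly $l/k$ and a constant multiple of $kl$, which after the constant calibration below is the desired $l'$.

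The hard part will be the constant bookkeeping rather than any isolated idea. One must simultaneously ensure that the geodesic-joining, the push-forward of the short-loop filling under $\bar f$, and the near-identity correction loops arising from $\bar f\circ f$ all introduce loops of length strictly below $l$ and distort lengths by a multiplicative factor no worse than $k$, so that the final window tightens to exactly $l/k\leq l'\leq lk$; this is precisely the role of the threshold $l\geq k^2+2k+2$ appearing in the definition of $k$-relatedness. The technical core is the van Kampen diagram argument showing that the $\bar f$-image of a filling by short loops is again a filling by short loops, which is what makes fillability by loops of bounded length a quasi-isometry invariant.
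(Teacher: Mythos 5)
The paper does not actually prove this lemma: it is imported verbatim from Bowditch \cite{bowditch}, so the only meaningful comparison is with Bowditch's original argument, and your sketch is in essence that argument. Your three ingredients are the right ones and are assembled correctly: (i) push the taut loop $\gamma$ of length $l$ forward by joining $f$-images of consecutive vertices by geodesics; (ii) show $\hat\gamma$ survives in $\Lambda_{l'}$ by pulling a hypothetical filling back through $\bar f$ and closing the gap between $\bar f_{\#}\hat\gamma$ and $\gamma$ by a ladder of bounded-perimeter quadrilaterals, contradicting tautness of $\gamma$ in $\Gamma_l$; (iii) extract an actual member of $H(\Lambda)$ via the induction showing every edge loop is a product of conjugates of taut loops of no greater length (this induction is valid: a non-taut loop of length $m$ is by definition null-homotopic in $\Lambda_m$, hence a product of conjugates of strictly shorter loops). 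The one caveat worth flagging is the step you explicitly defer: getting the window to be \emph{exactly} $l/k\leq l'\leq lk$ is not pure routine with a generic $(\lambda,\epsilon)$ quasi-isometry, since additive errors in the pullback of a loop of length just under $l/k$ would push its $\bar f$-image to length $kl/k+O(k)\geq l$, destroying the contradiction. This works out because for graphs one may take the quasi-isometry to be purely multiplicatively controlled on distinct vertices (absorbing the additive constant into $k$), and the threshold $l\geq k^2+2k+2$ is exactly what makes the correction quadrilaterals in step (ii) have perimeter $<l$; this calibration is the entire quantitative content of Bowditch's proof, and you correctly identify it as the remaining work rather than glossing over it. So: correct in structure, the same route as the cited source, with the decisive bookkeeping sketched but not executed.
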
 

We are now ready to prove the main theorem of this section. 
The proof is similar to that of Theorem 5.2 in \cite{KLS}. 
Here is a brief outline.
	For each $F\subset \N$ we will construct a function $\sigma_F$. 
	We will then give a rough computation of the taut loop length spectrum for the Cayley graph for $G_L(\sigma_F)$. 
	This will show that if $G_L(\sigma_F)$ is quasi-isometric to $G_L(\sigma_{F'})$, then $F\triangle F'$ is finite and conclude the desired result. 
	The key change from \cite{KLS} is that due to the sequence of covers we cannot take a single constant $C$ and must take a sequence of constants $C_i$ satisfying certain conditions relating to the function $\sigma_F$.  
Comparing to the proof of Theorem 5.2 in \cite{KLS} we have replaced the constant $\beta$ by the sequence of constants $r_p$ and chosen $C_i$ to ensure the arguments contained there still work.

\begin{theorem}\label{thm:uncountable}
	Let $S$ be a set of primes. 
	Then there are uncountably many quasi-isometry classes of groups that are of type $FP_2(\Z/p\Z)$ if and only if $p\in S$. 
\end{theorem}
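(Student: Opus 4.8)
The plan is to combine the construction used in \cref{thm:setofprimes} with the interleaving technique from \cite{KLS} to produce uncountably many quasi-isometry classes while controlling the finiteness properties. For each subset $F\subseteq \N$ I would build a function $\sigma_F\in\CZ$ that agrees with a fixed ``homological'' backbone (the congruence-subgroup covers of \cref{thm:setofprimes}, which guarantee the group is $FP_2(\Z/p\Z)$ exactly when $p\in S$) on a fixed sparse set of heights, and that on the remaining heights encodes the membership of various integers in $F$ by choosing either $L$ (no disk/relation) or a genuine normal cover such as $\tilde L$ (adding relations). The key point is that changing the cover only on a set of heights that is finite in every bounded window does not disturb the essential-triviality conditions of \cref{thm:typefpn}, so every $\sigma_F$ yields a group with the same finiteness type; the encoding of $F$ affects only the large-scale geometry.

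The next step is to compute, at least roughly, the taut loop length spectrum $H$ of the Cayley graph of $G_L(\sigma_F)$ with respect to the standard generating set given by the edges $E$ of $L$. Here I would use \cref{lem:trivial}, which tells me precisely when a word $\gamma^{[n]}$ is trivial: it is trivial if and only if $\gamma$ lifts to $\sigma_F(n)$. This dictionary converts the presence or absence of a relation at height $n$ into the presence or absence of a short taut loop at a length determined by $n$ and by the combinatorial length of $\gamma$. Using \cref{lem:minlength}, the kernel of any comparison map $G_L(\sigma)\to G_L(\sigma')$ has elements only of large word length, controlled by $M(\sigma,\sigma')$, which pins down the scale at which the spectrum ``sees'' a difference between two functions. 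The outcome I expect is that $H\big(G_L(\sigma_F)\big)$ records, at scales governed by the encoding heights, exactly the data of $F$.

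The obstacle, and where most of the real work lies, is the one the excerpt flags explicitly: in \cite{KLS} a single constant $\beta$ suffices, but because our backbone forces genuinely varying covers (the congruence subgroups have growing index, so the relators $\gamma_{i,n}^{[n]}$ sit at varying combinatorial sizes), I cannot use a uniform constant. I would instead introduce a sequence $C_i$ (replacing the $r_p$ alluded to in the outline) chosen so that the heights encoding $F$ are spaced far enough apart, relative to both the index growth and the exponent $n$ appearing in $\gamma^{[n]}$, that the corresponding taut-loop lengths fall into well-separated bands. Concretely I would verify that the $C_i$ can be chosen so that for $F\neq F'$ the symmetric difference $F\triangle F'$ being infinite forces $H\big(G_L(\sigma_F)\big)$ and $H\big(G_L(\sigma_{F'})\big)$ to fail to be $k$-related for every $k$, via the length bounds from \cref{lem:minlength}. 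This is the delicate estimate: ensuring the bands neither overlap nor get smeared out by the quasi-isometry constant $k$.

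Finally, I would assemble the conclusion. Since there are uncountably many subsets $F\subseteq\N$ with pairwise infinite symmetric difference (take an uncountable almost-disjoint family), the preceding step shows that the corresponding $H\big(G_L(\sigma_F)\big)$ are pairwise not $k$-related for any $k$, so by Bowditch's lemma the Cayley graphs are pairwise non-quasi-isometric. As each $G_L(\sigma_F)$ is finitely generated, quasi-isometry of Cayley graphs is quasi-isometry of groups, giving uncountably many quasi-isometry classes; and by construction every one of them is $FP_2(\Z/p\Z)$ precisely when $p\in S$. The cleanest way to present this is to state the construction of $\sigma_F$ and the choice of the $C_i$ first, then prove the spectrum-separation estimate as the central lemma, and deduce the theorem from Bowditch's criterion.
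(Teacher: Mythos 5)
Your overall architecture coincides with the paper's proof: heights $C_i^{2^i}$ with a fixed backbone of congruence covers interleaved with heights encoding $F\subseteq\N$, a two-sided band computation for the taut loop length spectrum via \cref{lem:minlength} (lower bound) and the relators $\gamma^{[n]}$ detected by \cref{lem:trivial} (upper bound), separation of bands by a growing sequence $C_i$, Bowditch's $k$-relatedness, and uncountably many $F$ with pairwise infinite symmetric difference. (Minor point: you have the roles of the constants reversed --- in the paper the sequence $r_p$, the minimal maximal length of a normal generating set for $\pi_1(L_p)$, replaces the single constant from \cite{KLS}, and the $C_i$ are then chosen with $C_i\alpha > r_{b_i}$, $C_i > C_{i-1}$ to separate the bands.) However, there is a genuine gap in your first step. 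Your justification that changing covers only on a set of heights that is ``finite in every bounded window'' does not disturb the essential-triviality condition of \cref{thm:typefpn} is false: that criterion demands $\tilde{H}_i(\sigma(n);R)=0$ for all but \emph{finitely many} pairs $(i,n)$, and since every subset of $\Z$ meets bounded windows in finite sets, sparseness has no content here --- any infinite set of heights carrying a cover with non-vanishing $\tilde{H}_1(\,\cdot\,;\Z/p\Z)$ destroys $FP_2(\Z/p\Z)$ outright. This wrecks your proposed encoding by ``$L$ versus $\tilde{L}$''. To realize the prime $2$ in the backbone one must take $\pi_1(L)=GL_3(\Z)$ (exactly as in \cref{thm:setofprimes}, where $G_2=GL_3(\Z)$ handles $p=2$ because \cite{Lee} only covers odd primes); but then $H_1(L;\Z/2\Z)\cong\Z/2\Z\neq 0$ since $GL_3(\Z)$ abelianizes to $\Z/2\Z$, so placing $L$ at the infinitely many heights encoding an infinite set $F$ forces failure of $FP_2(\Z/2\Z)$ whether or not $2$ lies in $S$, breaking the ``if and only if''. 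Taking $\pi_1(L)=SL_3(\Z)$ instead makes your encoding homologically harmless but removes the ability of the backbone to realize the prime $2$ at all.

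The paper's fix, which your write-up would need, is to encode $F$ not by ``no relation'' but by the cover $L_1$ corresponding to the perfect subgroup $SL_3(\Z)\leq GL_3(\Z)$: since $SL_3(\Z)$ is perfect, $H_1(L_1;R)=0$ for every ring $R$, and the other heights carry the simply connected $\tilde{L}=L_0$, so the binary choice $L_1$ versus $\tilde{L}$ is invisible to \cref{thm:typefpn} in degrees $i<2$ while still inserting relators of controlled length $r_1 C_{2n}^{2^{2n}}$ that the spectrum can see. The finiteness type is then governed solely by the covers $L_{a_m}$ placed at the odd-indexed heights $C_{2m+1}^{2^{2m+1}}$, which appear for \emph{every} $F$, giving the uniform $FP_2(\Z/p\Z)$ behaviour across the whole uncountable family. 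With that substitution the rest of your outline (band separation, failure of $k$-relatedness when $F\triangle F'$ is infinite, and the almost-disjoint family) goes through as in the paper.
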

\begin{proof}
	Let $L$ be a flag complex with fundamental group $GL_3(\Z)$. 
	For each prime $p$, let $L_p$ be the cover of $L$ corresponding to the level-$p$ congruence subgroup in $SL_3(\Z)$. 
	Let $L_1$ be the cover corresponding to $SL_3(\Z)$. 
	Let $L_0$ be the universal cover $\tilde{L}$ of $L$. 
	
	Given a finite set $\Omega$ of loops that normally generate $\pi_1(L_p)$, define $r_p(\Omega)$ as the maximum length of a loop in $\Omega$. 
	Define $r_p$ to be the minimum of $r_p(\Omega)$ as $\Omega$ runs over all possible normal generating sets for $\pi_1(L_p)$. 
	
	Let $(b_n)$ be the following sequence:
	$$b_{n} = 
	\begin{cases}
	a_m, & \text{if $n = 2m+1$}, \\
	1, & \text{if $n = 2m$},\\
	\end{cases}$$ 
	
	Let $\alpha = \sqrt{2/(d+1)}$, where $d$ is the dimension of $L$.
	Let $C_n$ be a sequence of integers satisfying the following conditions: 
	\begin{itemize}
		\item $C_1\alpha > 3$, 
		\item $C_n\alpha > r_{b_{n-1}}$,
		\item $C_n\alpha > r_{b_n}$ 
		\item $C_n >C_{n-1}$. 
	\end{itemize}
	From this we can deduce that $\alpha C_n^{2^n} > r_{b_{n-1}}C_{n-1}^{2^{n-1}}$. 
	
	Let $F\subset \N$.
	Let $\overline{F} = \{2n\mid n\in F\}\cup\{2n+1\mid n\in \N\}$.  
	Define $\sigma_F$ as follows:
	$$\sigma_F(n) = \begin{cases}
	L_{b_i}, &\text{if $n = {C_i^{2^{i}}}$ and $i\in \overline{F}$,}\\
	L, &\text{if $n = 0$,}\\
	{L_0}, &\text{otherwise.}
	\end{cases}$$
	
	Let $\Gamma_F$ be the Cayley graph of $G_L(\sigma_F)$ with generating set the edges of $L$. 
	We will prove the following two propositions about $H(\Gamma_F)$:
	
	\begin{itemize}
		\item If $k\in \overline{F}$, then $H(\Gamma_F)\cap [C_k^{2^k}\alpha, C_k^{2^k}r_{b_k}] \neq \emptyset$. 
		\item If $k>3$ and $k \notin \cup_{l\in \overline{F}}[C_l^{2^l}\alpha, C_l^{2^l}r_{b_l}]$, then $k\notin H(\Gamma_F)$. 
	\end{itemize}
	
	Suppose $k\in \overline{F}$, let $F' = F\smallsetminus\{k\}$. 
	There is a surjection $G_L(\sigma_F)\to G_L(\sigma_{F'})$. 
	Let $K$ be the kernel of this surjection. 
	By \cref{lem:minlength}, any non-identity element of $K$ has length at least $\alpha C_k^{2^k}$. 
	We also know that there is an element of length $r_kC_k^{2^k}$ in $K$. 
	The length of the shortest element of $K$ defines a member of $H(\Gamma_F)$. 
	Thus we obtain the first statement. 
	
	For the second statement, let $k\notin F$. 
	Choose $n\in \N$ maximal such that $r_{b_n}C_n^{2^n} < k$ or $-1$ otherwise.
	Let $F' = F\cap [0, n]$. 
	Let $K$ be the kernel of the map $G_L(\sigma_{F'})\to G_L(\sigma_F)$. 
	Consider the covering map $\Gamma_{F'}\to \Gamma_F$ coming from \cref{thm:covers}. 
	Every relator in $G_L(\sigma_{F'})$ has length $\leq r_{b_n}C_n^{2^n}$. 
	
	Now suppose that $\gamma$ is a loop of length $k$ in $\Gamma_F$. 
	We can lift $\gamma$ to $\Gamma_{F'}$. 
	If $\gamma$ lifts to a loop in $\Gamma_{F'}$, then it must be a consequence of loops of length $\leq r_{b_n}C_n^{2^n}$. 
	Thus, $\gamma$ cannot be taut. 
	
	Now suppose that $\gamma$ lifts to a non-closed path. 
	In this case $\gamma$ defines an element of $K$. 
	The shortest such element has length $\geq \alpha C_m^{2^m}$ where $m = M(\sigma_F, \sigma_{F'})$. 
	By choice of $n$ we have that $k\leq r_mC_m^{2^m}$. 
	Thus, we obtain that $k\in [\alpha C_m^{2^m}, r_{b_m}C_m^{2^m}]$. 
	However, this contradicts the choice of $k$ and thus $\gamma$ is not taut. 
	
	To complete the proof,  suppose that $l\in [\alpha C_m^{2^m}, r_{b_m}C_m^{2^m}]$ and $l'\in [\alpha C_{m+n}^{2^{m+n}}, r_{b_{m+n}}C_{m+n}^{2^{m+n}}]$ for some $n > 0$. 
	Then $$\frac{l'}{l} \geq \frac{\alpha C_{m+n}^{2^{m+n}}}{r_{b_m}C_m^{2^m}} > \frac{\alpha C_m^{2^{m+n}}}{r_{b_m}C_m^{2^m}} \geq \frac{\alpha C_m^{2^{m} - 1}} {r_{b_m}} > C_m^{2^{m}-2}.$$
	
	Suppose that $H(\Gamma_F)$ and $H(\Gamma_{F'})$ are $k$-related. If $n\in F\triangle F'$, then $C_n^{2^{2n}-2}\leq k$. 
	Thus if $F\triangle F'$ is infinite, then $H(\Gamma_F)$ and $H(\Gamma_{F'})$ are not $k$-related. 
	Hence, $G_L(\sigma_F)$ is not quasi-isometric to $G_L(\sigma_{F'})$. 
\end{proof}

While we have used the framework of Bowditch's taut loop length spectrum it is also possible to use the work of \cite{MOW}. 
Let $\mathcal{G}$ be the space of marked groups. 
Combining \cref{lem:trivial} and \cref{lem:minlength} we see that $\CZ\to \mathcal{G}$ given by $\sigma\mapsto G_L(\sigma)$ is a continuous injection with perfect image. 
Thus, by \cite[Theorem 1.1]{MOW} we obtain uncountably many quasi-isometry classes of groups of the form $G_L(\sigma)$. 
By carefully choosing subsets of $\CZ$ to ensure that the image is still perfect one can proves analogues of \cref{thm:uncountable} for other properties satisfied by the $G_L(\sigma)$.

\bibliographystyle{plain}
\bibliography{bib}

\end{document}